\documentclass[smallextended]{svjour3}

\usepackage{amssymb,graphicx,amsmath}
\usepackage{color,fullpage,hyperref,xcolor}

\usepackage{graphicx,pgf,tikz,epstopdf}
\usetikzlibrary{fadings,shapes,shapes.misc}

\usepackage{amsopn}

\usepackage[boxed,commentsnumbered]{algorithm2e}
\usepackage{rotating}

\usepackage{algpseudocode}

\makeatletter
\newcommand\footnoteref[1]{\protected@xdef\@thefnmark{\ref{#1}}\@footnotemark}
\makeatother

\newcommand{\real}{I\!\!R}

\newcommand{\TheTitle}{A Penalty Method for Rank Minimization Problems
in Symmetric Matrices}

\title{{\TheTitle}\thanks{This work was supported in part by the Air Force Office of Sponsored Research under grants FA9550-08-1-0081 and FA9550-11-1-0260
and by the National Science Foundation
			under Grants Number CMMI-1334327 and DMS-1736326.}}

\author{
  Xin Shen\thanks{Monsanto, St.\ Louis, MO.}
  \and
  John E. Mitchell\thanks{Department of Mathematical Sciences,
    Rensselaer Polytechnic Institute,
    Troy,  NY 12180
     (\href{mailto:mitchj@rpi.edu}{mitchj@rpi.edu}, \url{http://www.rpi.edu/\~mitchj}).}
}

\usepackage{amsopn}

\begin{document}

\maketitle

\begin{abstract}
The problem of minimizing the rank of a symmetric
positive semidefinite matrix subject to
constraints can be cast equivalently as a semidefinite program
with complementarity constraints (SDCMPCC). The formulation
requires two positive semidefinite matrices to be complementary.
This is a continuous and nonconvex reformulation of the rank
minimization problem.
We investigate calmness of locally optimal solutions to the
SDCMPCC formulation and hence show that any locally optimal
solution is a KKT point.

We develop a penalty formulation of the problem. We present
calmness results for locally optimal solutions to the penalty
formulation. We also develop a proximal alternating linearized
minimization (PALM) scheme for the penalty formulation, and
investigate the incorporation of a momentum term into the
algorithm. Computational results are presented.

\noindent
Keywords:
Rank minimization,
penalty methods,
alternating minimization

\noindent
AMS Classification:
  90C33,  %
  90C53  %

\end{abstract}

\section{Introduction to the Rank Minimization Problem}

Recently rank constrained optimization problems have received increasing interest because of their wide application in many fields including statistics, communication and signal processing \cite{fazel2001rank,srebro2003weighted}.
In this paper we mainly consider one genre of the problems whose objective is to minimize the rank of a matrix subject to a given set of constraints.
We consider the slightly more general form below:
\begin{equation} \label{RankGeneral}
\begin{array}{ll}
\displaystyle{
{\operatornamewithlimits{\mbox{minimize}}_{X \, \in \, \mathbb{R}^{m\times n}}}
} & \mbox{rank}(X) \, + \, \psi(X) \\ [0.15in]
\mbox{subject to} & X \in \mathcal{C}
\end{array}
\end{equation}
where $\mathbb{R}^{m\times n}$ is the space of size m by n matrices,
$\psi(X)$ is a Lipschitz continuous function,
and $\mathcal{C}$ is the feasible region for X;
$\mathcal{C}$ is not necessarily convex.

The class of problems has been considered  computationally challenging because of its nonconvex nature. The rank function is also highly discontinuous, which makes rank minimization problems hard to solve.
Methods using nonconvex optimization to solve rank minimization problems
include~\cite{kmOh2010,llsWei2016,sLuo2015,tWei2016}.
In contrast to the method in this paper, these references work with an explicit low
rank factorization of the matrix of interest.
Other methods based on a low-rank factorization include the thresholding methods
\cite{candes1,cambierabsil2016,vandereycken2013,weiCCL2016b}.
Our approach works with a nonconvex nonlinear optimization problem that is
an exact reformulation of the rank minimization problem.

The exact reformulation of the rank minimization problem is a mathematical program with
semidefinite cone complementarity constraints (SDCMPCC).
Similar to the LPCC formulation for the $\ell_0$ minimization
problem~\cite{bkSchwartz2014,FMPSWachter13},
the advantage of the SDCMPCC formulation is that it can be expressed as a smooth nonlinear program,
thus it can be solved by general nonlinear programming algorithms.
The purpose of this paper is to investigate whether nonlinear semidefinite programming algorithms can be applied to solve the SDCMPCC formulation and examine the quality of solution returned by the nonlinear algorithms. We're faced with two challenges. The first one is the nonconvexity of the SDCMPCC formulation, which means that we can only assure that the solutions we find are locally optimal. The second is that most nonlinear algorithms use KKT conditions as their termination criteria. 
Since a general SDCMPCC formulation is not well-posed because of the complementarity constraints, i.e, KKT stationarity may not hold at local optima, 
there might be some difficulties with the convergence of these algorithms.
We show in Theorem~\ref{thm:KKTsdcmpcc} that any locally optimal point for the
SDCMPCC formulation of the
rank minimization problem does indeed satisfy the KKT conditions.

When $\psi(X)\equiv 0$, a popular approach to choosing~$X$ is to use the
nuclear norm approximation
\cite{fazel2001rank,vandenberghe6,MaGoldfarbChenMP2010,candes1},
a convex approximation of the original rank minimization problem.
The nuclear norm of a matrix $X\in\mathcal{R}^{m\times n}$ is defined as the sum of its singular values:
\begin{displaymath}
||X||_*\,=\,\sum \sigma_i\,=\,trace(\sqrt{X^T X})
\end{displaymath}
In the approximated problem, the objective is to find a matrix with the minimal nuclear norm
\begin{equation} \label{RankGeneral_nuclear}
\begin{array}{ll}
\displaystyle{
{\operatornamewithlimits{\mbox{minimize}}_{X \, \in \, \mathbb{R}^{m\times n}}}
} &||X||_*\\ [0.15in]
\mbox{subject to} & X \in \mathcal{C}
\end{array}
\end{equation}
The nuclear norm is convex and continuous. Many algorithms have been developed previously to find the optimal solution to the nuclear norm minimization problem, including interior point methods \cite{vandenberghe6}, singular value thresholding \cite{candes1}, Augmented Lagrangian method \cite{lin2010augmented}, proximal gradient method \cite{liu2012implementable}, subspace selection method \cite{hsieh2014nuclear}, reweighting methods~\cite{mFazel2012},
and so on.
These methods have been shown to be efficient and robust in solving large scale nuclear norm minimization problems in some applications.
Previous works provided some explanation for the good performance for convex approximation by showing that nuclear norm minimization and rank minimization is equivalent under certain assumptions.
Recht et al.~\cite{recht2} presented a version of the restricted isometry property for a rank minimization problem.
Under such a property the solution to the original rank minimization problem can be exactly recovered by solving the nuclear norm minimization problem.  However, these properties are too strong and hard to validate, and the equivalence result cannot be extended to the general case.
Zhang et al.~\cite{zhang2013counterexample}
gave a counterexample in which the nuclear norm fails to find the matrix with the minimal rank.

In this paper, we focus on the case of symmetric matrices~$X$.
Let $\mathbb{S}^n$ denote the set of symmetric $n \times n$ matrices,
and $\mathbb{S}^n_+$ denote the cone of $n\times n$ symmetric positive semidefinite matrices.
The set $\mathcal{C}$ is taken to be the intersection of~$\mathbb{S}^n_+$ with another convex set,
taken to be an affine manifold in our computational testing.
Unless otherwise stated, the norms we use are the Euclidean 2-norm for vectors
and the Frobenius norm for matrices.

To improve the performance of the nuclear norm minimization scheme in the case of symmetric
positive semidefinite matrices,
a reweighted nuclear norm heuristic was put forward by Mohan et al.~\cite{mohan2010reweighted}.
In each iteration of the heuristic a reweighted nuclear norm minimization problem is solved, which takes the form:
\begin{equation} \label{Rankreweighted}
\begin{array}{ll}
\displaystyle{
{\operatornamewithlimits{\mbox{minimize}}_{X \, \in \, \mathbb{S}^n}}
} &\langle W, X\rangle \\ [0.15in]
\mbox{subject to} & X \in \mathcal{C} \cap \mathbb{S}^n_+
\end{array}
\end{equation}
where $W$ is a positive semidefinite matrix, with W based upon the result of the last iteration.
As with the standard nuclear norm minimization, the method only applies to problems with special structure.
The lack of theoretical guarantee for these convex approximations in general problems motivates us to turn to
the exact formulation of the rank function.
In our computational testing, we compare the results obtained with our approach to those obtained
through optimizing the nuclear norm.

We now summarize the contents of the paper.
Throughout, we work with the set of symmetric positive semidefinite matrices~$\mathbb{S}^n_+$.
The equivalent continuous reformulation of (\ref{RankGeneral}) is presented in
Section~\ref{sec:sdcmpcc}.
We show that any local minimizer for the continuous reformulation satisfies appropriate
KKT conditions in Section~\ref{sec:kkt} when $\mathcal{C}$ is given
by the intersection of $\mathbb{S}^n_+$ and the set of solutions
to a collection of continuous inequalities.
A penalty approach is described in Section~\ref{sec:penalty}
in the case when $\mathcal{C}$ is the intersection of $\mathbb{S}^n_+$
and an affine manifold.
An alternating approach to solve the penalty formulation is presented in
Section~\ref{sec:PALM}, with test results on Euclidean distance matrix completion problems
given in Section~\ref{sec:comp}.

\section{Semidefinite Cone Complementarity Formulation for Rank Minimization Problems
\label{sec:sdcmpcc}}

A mathematical program with semidefinite cone complementarity constraints (SDCMPCC) is a special case of a mathematical program with complementarity constraints (MPCC). In SDCMPCC problems the constraints include complementarity between matrices rather than vectors. 
The general SDCMPCC program takes the following form:
\begin{equation} \label{GSDCMPCC}
\begin{array}{ll}
\displaystyle{
{\operatornamewithlimits{\mbox{minimize}}_{x \, \in \, \mathbb{R}^q}}
} &f(x)\\ [0.15in]
\mbox{subject to} &  g(x) \,\leq\, 0\\
& h(x)\,=\,0\\
& \mathbb{S}^n_+ \,\ni\, G(x)\,\perp\,H(x) \,\in\,\mathbb{S}^n_+
\end{array}
\end{equation}
where $f:\real^q \rightarrow \real$, $h:\real^q\rightarrow\real^p$, $g:\real^q \rightarrow \real^m$,
$G:\real^q\rightarrow\mathbb{S}^n$ and $H:\real^q \rightarrow \mathbb{S}^n$.
The requirement $G(x)\,\perp\,H(x)$ for $G(x), H(x) \in \mathbb{S}^n_+$
is that the Frobenius inner product of $G(x)$ and $H(x)$ is equal to 0, where the Frobenius inner product of two matrices $A\in\mathbb{R}^{m\times n}$ and $B\in\mathbb{R}^{m\times n}$ is defined as
\begin{displaymath}
\langle A, B\rangle \,=\,trace(A^T B) .
\end{displaymath}
We define
\begin{equation}
c(x) \, := \, \langle G(x), H(x) \rangle .
\end{equation}
It is shown in Bai et al.~\cite{lijie2} that (\ref{GSDCMPCC}) can be reformulated
as a convex conic completely positive optimization problem.
However, the cone in the completely positive formulation does not have a
polynomial-time separation oracle.

An SDCMPCC can be written as a nonlinear semidefinite program.
Nonlinear semidefinite programming recently received much attention because of its wide applicability.
Yamashita and Yabe~\cite{yamashita2015survey} surveyed numerical methods for solving nonlinear SDP programs, including Augmented Lagrangian methods~\cite{kocvara2}, sequential SDP methods and primal-dual interior point methods.
However, there is still  much room for research in both theory and practice with solution methods, especially when the size of problem becomes large.

An SDCMPCC is a special case of a nonlinear SDP program. It is hard to solve in general.
In addition to the difficulties in general nonlinear semidefinite programming,
the complementarity constraints pose challenges to finding the local optimal solutions since the KKT condition may not hold at local optima.
Previous work showed that optimality conditions in MPCC, such as M-stationary, C-Stationary and Strong Stationary, can be generalized into the class of SDCMPCC.
Ding et al.~\cite{dsye2010} discussed various kinds of first order optimality conditions of an SDCMPCC
and their relationship with each other.

An exact reformulation of the rank minimization problem
using semidefinite cone constraints is due to Ding et al.~\cite{dsye2010}.
We work with a special case of (\ref{RankGeneral}), in which the matrix variable
$X\in \mathbb{R}^{n \times n}$ is restricted to be symmetric and positive semidefinite.  The special case takes the form:
\begin{equation} \label{RankPSD}
\begin{array}{ll}
\displaystyle{
{\operatornamewithlimits{\mbox{minimize}}_{X \, \in \, \mathbb{S}^n}}
} &\mbox{rank}(X) \, + \, \psi(X) \\ [0.15in]
\mbox{subject to} & X \in \tilde{\mathcal{C}}\\[0.15in]
& X\,\succeq \,0
\end{array}
\end{equation}
By introducing an auxilliary variable $U\in\mathbb{R}^{n\times n}$, we can model Problem (\ref{RankPSD}) as a mathematical program with semidefinite cone complementarity constraints:
\begin{equation} \label{CompleRankPSD}
\begin{array}{ll}
\displaystyle{
{\operatornamewithlimits{\mbox{minimize}}_{X,U \, \in \, \mathbb{S}^n}}
} &n\,-\,\langle I,\,U\rangle \, + \, \psi(X) \\ [0.15in]
\mbox{subject to} & X \in \tilde{\mathcal{C}}\\[0.15in]
& 0\,\preceq\, X\,\perp\,U\,\succeq\,0\\[0.15in]
& 0\,\preceq\,I\,-\,U\\[0.15in]
& X\,\succeq\, 0\\[0.15in]
& U\,\succeq\,0
\end{array}
\end{equation}
The equivalence between Problem (\ref{RankPSD}) and Problem (\ref{CompleRankPSD}) can be verified by a proper assignment of U for given feasible X. Suppose X has the eigenvalue decomposition:
\begin{equation}\label{XEig}
X\,=\,P^T \Sigma P
\end{equation}
Let $P_{0}$ be the matrix composed of columns in P corresponding to zero eigenvalues. We can set:
\begin{equation}\label{AssignU}
U\,=\,P_{0} \,P_{0}^T
\end{equation}
It is obvious that 
\begin{equation}
\mbox{rank}(X)\,=\,n\,-\,\langle I, U\rangle 
\end{equation}
It follows that:
\begin{displaymath}
Opt(\ref{RankPSD})\,\geq\,Opt(\ref{CompleRankPSD})
\end{displaymath}
The opposite direction of the above inequality can be easily validated by the complementarity constraints. If there exists any feasible matrix pair $(X,U)$ with the trace of U greater than $n - Opt(\ref{RankPSD})$, the complementarity constraints would be violated:
since all the eigenvalues of $U$ are no larger than 1, the rank of $U$ is at least
as large as its trace, so the rank of $X$ would be smaller than the optimal value of~(\ref{RankPSD}).

The complementarity formulation can be extended to cases where the matrix variable $X\in \mathbb{R}^{m\times n}$ is neither positive semidefinite nor symmetric. One way to deal with nonsymmetric $X$ is to introduce an auxilliary variable Z:
\begin{displaymath}
Z\,=\,\left[\begin{array}{clcr}
G&X^T\\
X&B\end{array}\right]\succeq 0
\end{displaymath} 
Liu et al.~\cite{vandenberghe6} has shown that for any matrix $X$, we can find matrix $G$ and $B$ such that $Z\succeq 0$ and $\mbox{rank}(Z)=\mbox{rank}(X)$. The objective is to minimize the rank of matrix $Z$ instead of $X$.

A drawback of the above extension is that it might introduce too many variables. An alternative way is to modify the complementarity constraint. If $m>n$, the rank of matrix $X$ must be bounded by n and equals the rank of matrix $X^T X\in \mathbb{S}^n_+$. $X^T X$ is both symmetric and positive semidefinite and we impose the following constraint:
\begin{equation*}
\langle U, X^T X\rangle \,=\,0
\end{equation*}
where $U\in\mathbb{S}^{n\times n}$. The objective is minimize the rank of $X^T X$ instead,
or equivalently to minimize $n-\langle I,U\rangle $.

\section{Constraint Qualification of the SDCMPCC Formulation   \label{sec:kkt}}

SDCMPCC problems are generally hard to solve and there have been discussions on potential methods to solve them \cite{wu2015properties,zhang2011convergence}, including relaxation and penalty methods. The original SDCMPCC formulation and all its variations fall into the genre of nonlinear semidefinite programming.
Most existing algorithms use the KKT conditions as criteria for checking local optimality,
and they terminate at KKT stationary points. The validity of KKT conditions at local optima can be guaranteed by constraint qualification.  However, as pointed out in \cite{dsye2010}, common constraint qualifications such as LICQ and Robinson CQ are violated for SDCMPCC.
The question arises as to whether any constraint qualification holds at the SDCMPCC formulation of a rank minimization problem. In this section we'll show that a constraint qualification called calmness holds at any
local optimum of the SDCMPCC formulation.
In this section, we assume only that
$\mathcal{C}$ is given
by the intersection of a closed convex cone and the set of solutions
to a collection of continuous inequalities.

\subsection{Introduction of Calmness}

Calmness was first defined by Clarke \cite{clarke1990optimization}.
If calmness holds then a first order KKT necessary condition holds at a local minimizer.
Thus, calmness plays the role of a constraint qualification,
although it involves both the objective function and the constraints.
It has been discussed in the context of conic optimization problems in
\cite{huang2006,jjYe15,zhai2014}, in addition to Ding et al.~\cite{dsye2010}.
Here, we give the definition from~\cite{clarke1990optimization},
adapted to our setting.
\begin{definition}
Let $K \subseteq \real^n$ be a convex cone.
Let $f:\real^q \rightarrow \real$, $h:\real^q\rightarrow\real^p$, $g:\real^q \rightarrow \real^m$,
and $G:\real^q\rightarrow\real^n$ be continuous functions.
A feasible point $\bar{x}$ to
the conic optimization problem
\begin{displaymath}
\begin{array}{lrcl}
\min_{x \in \real^q} & f(x) \\
\mbox{subject to} & g(x) & \leq & 0  \\
& h(x) & = & 0  \\
& G(x) & \in & K
\end{array}
\end{displaymath}
is {\em Clarke calm}
if there exist positive $\epsilon$ and $\mu$ such that
\begin{displaymath}
f(x) \, - \, f(\bar{x}) \, + \mu || (r,s,P) || \, \geq \, 0
\end{displaymath}
whenever $||(r,s,P)|| \leq \epsilon$, $||x-\bar{x}|| \leq \epsilon$,
and $x$ satisfies the following conditions:
\begin{displaymath}
h(x)+r=0, \, g(x)+s \leq 0, \, G(x) + P \in K.
\end{displaymath}
\end{definition}

The idea of calmness is that when there is a small perturbation in the constraints, the improvement in the objective value in a neighborhood of $\bar{x}$ must be bounded by some constant times the magnitude of perturbation.
\begin{theorem}   \label{thm:calmkkt}  \cite{dsye2010}
If calmness holds at a local minimizer $\bar{x}$ of (\ref{GSDCMPCC})
then the following first order necessary KKT conditions hold at~$\bar{x}$:
\begin{quote}
there exist multipliers $\lambda^h\in\mathbb{R}^p$, $\lambda^g\in\real^m$,
$\Omega^G\in\mathbb{S}^n_+$, $\Omega^H\in\mathbb{S}^n_+$,
and $\lambda^c\in\real$
such that the subdifferentials of the constraints and objective function
of (\ref{GSDCMPCC}) satisfy
\begin{displaymath}
\begin{array}{l}
0 \, \in \, \partial f(\bar{x}) \, + \, \partial \langle h,\lambda^h \rangle (\bar{x})
\, + \, \partial \langle g, \lambda^g \rangle (\bar{x})   \\ [5pt]  \qquad \qquad
\, + \, \partial \langle G, \Omega^G \rangle (\bar{x})
\, + \, \partial \langle H, \Omega^H \rangle (\bar{x})  %
\, + \, \lambda^c \partial c(\bar{x}) ,
\\ [5pt]
\lambda^g \geq 0, \, \langle g(\bar{x}), \lambda^g \rangle = 0,
\, \Omega^G \in \mathbb{S}^n_+, \, \Omega^H \in \mathbb{S}^n_+, \\ [5pt]
\langle \Omega^G, G(\bar{x}) \rangle = 0,
 \, \langle \Omega^H, H(\bar{x}) \rangle = 0.
\end{array}
\end{displaymath}
\end{quote}
\end{theorem}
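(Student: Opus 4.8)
The plan is to carry out the standard exact‑penalization argument that links calmness to first‑order optimality, in the spirit of Clarke~\cite{clarke1990optimization} and~\cite{dsye2010}. Cast~(\ref{GSDCMPCC}) in the conic template of the definition of calmness above by taking the cone to be $\mathbb{S}^n_+\times\mathbb{S}^n_+$ and the conic map to be $x\mapsto(G(x),H(x))$, and by adjoining the scalar equality $c(x)=0$ to the equality constraints $h$. First I would show that calmness at $\bar{x}$ makes $\bar{x}$ a local minimizer, with the same constant $\mu$, of the exact penalty function
\[
\Phi(x)\ :=\ f(x)\ +\ \mu\Bigl(\,\|h(x)\|+|c(x)|+\bigl\|\max\{g(x),0\}\bigr\|+\mathrm{dist}\!\left(G(x),\mathbb{S}^n_+\right)+\mathrm{dist}\!\left(H(x),\mathbb{S}^n_+\right)\,\Bigr).
\]
The reason is that, for any $x$ near $\bar{x}$, the perturbation $(r,s,P)$ of least norm rendering $x$ feasible in the perturbed problem is explicit: the equality residuals are forced to equal $-h(x)$ and $-c(x)$, the inequality residuals are $s_i=-\max\{g_i(x),0\}$, and the cone residual $P$ is the difference between the projection of $(G(x),H(x))$ onto $\mathbb{S}^n_+\times\mathbb{S}^n_+$ and $(G(x),H(x))$. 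Each residual vanishes at $\bar{x}$ and varies continuously with $x$, so this least‑norm perturbation has norm at most $\epsilon$ once $x$ is close enough to $\bar{x}$; substituting it into the calmness inequality and using $\Phi(\bar{x})=f(\bar{x})$ gives $\Phi(x)\ge\Phi(\bar{x})$ on a neighborhood of $\bar{x}$. (Passing between the norm of $(r,s,P)$ and the sum of its component norms only rescales $\mu$ by a fixed factor.)

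Next I would apply the necessary condition $0\in\partial\Phi(\bar{x})$ for the Clarke subdifferential --- valid since $\Phi$ is locally Lipschitz, the data being $C^1$ as in the rank‑minimization instances considered here, with the Lipschitz case analogous --- and unwind it with the Clarke sum and chain rules. The sum rule splits $0\in\partial\Phi(\bar{x})$ into $\partial f(\bar{x})$ together with $\mu$ times the Clarke subdifferentials of the five penalty terms, which I would evaluate at $\bar{x}$, absorbing the factor $\mu$ into the multipliers. Because $h(\bar{x})=0$, the $\|h(\cdot)\|$ term contributes an element of $\partial\langle h,\lambda^h\rangle(\bar{x})$ with $\|\lambda^h\|_*\le\mu$; because $c(\bar{x})=0$, the $|c(\cdot)|$ term contributes $\lambda^c\,\partial c(\bar{x})$ with $\lambda^c\in[-\mu,\mu]\subset\real$; the $\|\max\{g(\cdot),0\}\|$ term is supported on the active indices $g_i(\bar{x})=0$ with nonnegative weights, contributing an element of $\partial\langle g,\lambda^g\rangle(\bar{x})$ with $\lambda^g\ge0$ and $\langle g(\bar{x}),\lambda^g\rangle=0$; and, using the standard fact that the Clarke subdifferential of $\mathrm{dist}(\cdot,\mathbb{S}^n_+)$ at a matrix $Y\in\mathbb{S}^n_+$ equals $N_{\mathbb{S}^n_+}(Y)\cap\mathbb{B}=\{\,Z:Z\preceq0,\ \langle Z,Y\rangle=0,\ \|Z\|\le1\,\}$, the $\mathrm{dist}(G(\cdot),\mathbb{S}^n_+)$ term yields a matrix $Z\preceq0$ with $\langle Z,G(\bar{x})\rangle=0$ that, after rescaling by $\mu$ and a sign flip, becomes $\Omega^G\in\mathbb{S}^n_+$ with $\langle\Omega^G,G(\bar{x})\rangle=0$ entering the stationarity inclusion, and likewise $\Omega^H\in\mathbb{S}^n_+$ with $\langle\Omega^H,H(\bar{x})\rangle=0$. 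Collecting these selections, together with the free multiplier $\lambda^c$, reproduces exactly the stationarity inclusion and the sign and complementarity relations in the statement.

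The conceptual heart --- and the step I would watch most carefully --- is the first one: confirming that the least‑norm perturbation is genuinely admissible in the calmness inequality, which rests on continuity of $h,g,G,H,c$ and of the projection onto $\mathbb{S}^n_+$, and that switching norms on the product space costs only a constant factor in $\mu$. The second step is routine nonsmooth calculus; the only delicate point there is the sign bookkeeping in the distance‑to‑$\mathbb{S}^n_+$ subdifferential, governed by the polarity $N_{\mathbb{S}^n_+}(Y)=\{Z\preceq0:\langle Z,Y\rangle=0\}$, which is precisely what places $\Omega^G$ and $\Omega^H$ in $\mathbb{S}^n_+$.
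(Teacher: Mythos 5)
The paper itself does not prove this theorem: it is quoted from Ding et al.\ \cite{dsye2010}, so there is no internal proof to compare against. Your route --- calmness gives exact penalization of all constraint violations, then $0\in\partial\Phi(\bar{x})$ unwound by Clarke's sum and chain rules --- is exactly the standard argument behind such multiplier rules, and your penalization step is set up correctly: the least-norm perturbation you exhibit is admissible by continuity of the data and of the projection onto $\mathbb{S}^n_+$, and the norm-equivalence remark handles the product-space norm.

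The one genuine flaw is the sign bookkeeping at the end. The chain rule applied to $\mathrm{dist}(G(\cdot),\mathbb{S}^n_+)$ produces a multiplier $Z\in N_{\mathbb{S}^n_+}(G(\bar{x}))$, i.e.\ $Z\preceq 0$ with $\langle Z,G(\bar{x})\rangle=0$, contributing the term $\partial\langle G,Z\rangle(\bar{x})$. If you set $\Omega^G:=-\mu Z\succeq 0$, then for Clarke subdifferentials $\mu\,\partial\langle G,Z\rangle(\bar{x})=\partial\bigl(-\langle G,\Omega^G\rangle\bigr)(\bar{x})=-\partial\langle G,\Omega^G\rangle(\bar{x})$: flipping the sign of the multiplier flips the sign of its term. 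What your argument actually yields is
\[
0\,\in\,\partial f(\bar{x})+\partial\langle h,\lambda^h\rangle(\bar{x})+\partial\langle g,\lambda^g\rangle(\bar{x})-\partial\langle G,\Omega^G\rangle(\bar{x})-\partial\langle H,\Omega^H\rangle(\bar{x})+\lambda^c\,\partial c(\bar{x})
\]
with $\Omega^G,\Omega^H\in\mathbb{S}^n_+$, not the displayed inclusion with plus signs. This is not cosmetic: the literal ``$+$'' form with $\Omega^G\in\mathbb{S}^n_+$ cannot be derived, since it already fails for the calm one-dimensional instance $\min\{x:\,x\geq 0\}$ (take $G(x)=[x]$, $H\equiv 0$), where it would force the scalar multiplier to equal $-1$ while being required nonnegative. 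So the version your argument proves is the normal-cone form (equivalently, PSD multipliers entering with minus signs), which is what the cited result intends; state that form explicitly and note the sign convention rather than asserting that ``a sign flip'' lands exactly on the statement as printed.
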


In the framework of
general nonlinear programming, previous results \cite{lu2012relation} show that
the Mangasarian-Fromowitz constraint qualification (MFCQ) and
the constant-rank constraint qualification (CRCQ) imply local calmness.
When all the constraints are linear, CRCQ will hold.
However, in the case of SDCMPCC, calmness may not hold at locally optimal points.
Linear semidefinite programming programs are
a special case of SDCMPCC: take $H(x)$ identically equal to the zero matrix.
Even in this case, calmness may not hold.
For linear SDP, the optimality conditions in Theorem~\ref{thm:calmkkt}
correspond to primal and dual feasibility together with complementary slackness,
so for example any linear SDP which has a duality gap or where dual attainment fails will not satisfy calmness.
Consider the example below, where we show explicitly that calmness does not hold:
\begin{equation}\label{SDPTest}
\begin{array}{ll}
\displaystyle{
{\operatornamewithlimits{\mbox{minimize}}_{x_1,x_2}}
} & x_2\\ [0.15in]
\mbox{s.t}&G(x)\,=\,\left[\begin{array}{clcr}
x_2+1&0&0\\
0&x_1&x_2\\
0&x_2&0\end{array}\right]\,\succeq\,0
\end{array}
\end{equation}
It is trivial to see that any point $(x_1, 0)$ with $x_1 \geq 0$ is a global optimal point to the problem. However:
\begin{proposition}
Calmness does not hold at any point $(x_1, 0)$ with $x_1\geq 0$.
\end{proposition}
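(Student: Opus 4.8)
The plan is to show the calmness inequality cannot hold by exhibiting, at an arbitrary feasible point $\bar{x}=(x_1,0)$ with $x_1\ge 0$ (which the text already notes is globally optimal, so $f(\bar{x})=0$), a one‑parameter family of arbitrarily small constraint perturbations that nonetheless permit a proportionally much larger decrease in the objective. Since problem \eqref{SDPTest} has no equality or inequality constraints apart from the conic one, the perturbation triple $(r,s,P)$ reduces to a single symmetric matrix $P$, and $\|(r,s,P)\|=\|P\|$; the perturbed feasibility condition is simply $G(x)+P\succeq 0$. The key idea is that in the unperturbed problem the vanishing $(3,3)$ entry of $G(x)$ pins $x_2$ to $0$, so a perturbation that puts a small positive number in that slot ``opens up'' room for $x_2$ to become negative.

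Concretely, for small $t>0$ I would take $P(t)$ to be the symmetric matrix with $(3,3)$ entry equal to $t$ and all other entries $0$, so that $\|P(t)\|=t$ in any of the usual matrix norms, and I would set the candidate point
\[
x(t)\;=\;\bigl(x_1+\sqrt{t},\;-t^{3/4}\bigr),
\]
which converges to $\bar{x}$ as $t\to 0^+$. A direct computation gives
\[
G(x(t))+P(t)\;=\;
\begin{pmatrix}
1-t^{3/4} & 0 & 0\\
0 & x_1+\sqrt{t} & -t^{3/4}\\
0 & -t^{3/4} & t
\end{pmatrix},
\]
which is block diagonal: the scalar block $1-t^{3/4}$ is positive for $t\in(0,1)$, and the lower $2\times 2$ block has nonnegative diagonal entries and determinant $(x_1+\sqrt{t})\,t-t^{3/2}=x_1 t\ge 0$, hence is positive semidefinite. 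So $x(t)$ is feasible for the $P(t)$‑perturbed problem.

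Finally I would substitute into the calmness inequality: since $f(x(t))-f(\bar{x})=-t^{3/4}$ and $\|(r,s,P(t))\|=t$, we get
\[
f(x(t))-f(\bar{x})+\mu\,\|(r,s,P(t))\|\;=\;-t^{3/4}+\mu t ,
\]
which is strictly negative once $t<\mu^{-4}$. Hence, given any candidate constants $\epsilon,\mu>0$ in the definition of Clarke calmness, choosing $t>0$ small enough that simultaneously $t\le\epsilon$, $\|x(t)-\bar{x}\|\le\epsilon$, and $t<\min\{1,\mu^{-4}\}$ yields an admissible perturbation and point violating the calmness inequality; therefore calmness fails at every $(x_1,0)$ with $x_1\ge 0$. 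The only genuinely nontrivial step is spotting the right scaling — a perturbation of size $t$ in the $(3,3)$ position buys an objective improvement of order $t^{3/4}\gg t$ — together with the positive semidefiniteness check on the perturbed matrix; the rest is bookkeeping with the definition.
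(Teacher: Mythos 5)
Your proof is correct and follows essentially the same route as the paper's: perturb only the $(3,3)$ entry of the conic constraint by a small positive amount and let $x_2$ go negative at a larger order (the paper uses $x_2=-\delta^2$ against a perturbation of size $\delta^3$; you use $x_2=-t^{3/4}$ against a perturbation of size $t$), so the objective decrease outpaces $\mu$ times the perturbation for any fixed $\mu$. A minor bonus is that your shift $x_1+\sqrt{t}$ makes the construction work uniformly for every $x_1\ge 0$, whereas the paper writes out only the case $x_1=0$ and omits $x_1>0$.
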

\begin{proof}
We will omit the case when $x_1>0$ and only show the proof for the case $x_1\,=\,0$. Take
\begin{equation*}
x_1\,=\,\,\delta\qquad\mbox{and} \qquad x_2\,=\, -\delta^2
\end{equation*}
As $\delta\rightarrow 0$, we can find a matrix:
\begin{equation*}
\begin{array}{ll}
M\,=\,\left[\begin{array}{clcr}
1 -\delta^2&0&0\\
0&\delta&-\delta^2\\
0&-\delta^2&\delta^3\end{array}\right]\,\succeq\,0
\end{array}
\end{equation*}
in the semidefinite cone and 
\begin{equation*}
||G(\delta,-\delta^2) - M|| = \delta^3.
\end{equation*}
However, the objective value at $(\delta,-\delta^2)$ is $-\delta^2$. Thus we have:
\begin{equation*}
\frac{f(x_1,x_2)\,-\,f(0,0)}{||G(\delta,-\delta^2) - M||}\,=\,
\frac{-\delta^2}{||G(\delta,-\delta^2) - M||}\,\leq\,\frac{-\delta^2}{\delta^3}\rightarrow -\infty
\end{equation*}
as $\delta\rightarrow 0$. It follows that calmness does not hold at the point $(0,0)$
since $\mu$ is unbounded.
\end{proof}

\subsection{Calmness of SDCMPCC Formulation}

In this part, we would like to show that in Problem (\ref{CompleRankPSD}), calmness holds for each pair $(X, U)$  with X feasible and U given by (\ref{AssignU}).
The variable $x$ in (\ref{GSDCMPCC}) is equal to the pair $(X,U)$ from~(\ref{CompleRankPSD}),
so $G(x)=X$ and $H(x)=U$.
We assume
\begin{displaymath}
\mathcal{C} \, = \, \mathbb{S}^n_+ \, \cap \{ X : g_i(X) \leq 0, i=1,\ldots,m_1\}
\end{displaymath}
for continuous functions $g_i(X)$;
these functions are incorporated into $g(x)$ in the formulation~(\ref{GSDCMPCC}).
Before presenting the propositions, we introduce the rank constrained problem
for any positive integer~$k$:
\begin{displaymath}
\begin{array}{llr}
\min_{X \in \mathbb{S}^n_+} & \psi(X)  \\
\mbox{subject to} & X \in \mathcal{C} & \qquad (RC(k))  \\
& \mbox{rank}(X) \, \leq \, k.
\end{array}
\end{displaymath}
\begin{proposition}   \label{prop:localopt}
Let $X$ be a local minimizer of~$(RC(k))$ for some choice of~$k$ and
let $U$ be given by (\ref{AssignU}).
Then $(X,U)$ is a local optimal solution in Problem (\ref{CompleRankPSD}).
Conversely, if
$(X,U)$ is a local optimal solution to (\ref{CompleRankPSD})
then $U$ is  given by (\ref{AssignU}) and
 $X$ is a local minimizer of~$(RC(k))$ with $k=\mbox{rank}(X)$.
\end{proposition}
\begin{proof}
The proposition follows from the fact that $\mbox{rank}(X')\geq \mbox{rank}(X)$ for all $X'$ close enough to $X$. 
\end{proof}
\begin{proposition}   \label{prop:calm}
For any local minimizer $X$ of~$(RC(k))$ for some choice of~$k$
with $U$ given by (\ref{AssignU}),
let $(\hat{X}, \hat{U})$ be a feasible point to the optimization problem below:
\begin{equation} \label{CompleRankPSDPerturbed}
\begin{array}{ll}
\displaystyle{
{\operatornamewithlimits{\mbox{minimize}}_{\hat{X}, \hat{U} \, \in \, \mathbb{S}^n}}
} &n\,-\,\langle I,\,\hat{U}\rangle \, + \, \psi(X) \\ [0.15in]
\mbox{subject to} & \hat{X}\,+\,p \in \tilde{\mathcal{C}}\\[0.15in]
& |\langle \hat{X},\,\hat{U}\rangle |\,\leq\,q\\[0.15in]
& \lambda_{min}(I\,-\,\hat{U})\,\geq\,-r\\[0.15in]
& \lambda_{min}(\hat{X})\,\geq\, -h_1\\[0.15in]
& \lambda_{min}(\hat{U})\,\geq\, -h_2
\end{array}
\end{equation}
where $p$, $q$, $r$, $h_1$ and $h_2$ are perturbations to the constraints and $\lambda_{min}(M)$
denotes the minimum eigenvalue of matrix~$M$.
Assume $X$ has at least one positive eigenvalue.
For $||(p,q,r,h_1,h_2)||$, $||X-\hat{X}||$, and $||U-\hat{U}||$
all sufficiently small, we have
\begin{equation}
\begin{array}{rcl}
\langle I,U\rangle -\langle I, \hat{U}\rangle &\geq & -\frac{2q}{\tilde{\lambda}}\,-\,
\left(n-\mbox{rank}(X)\right)\left(r+(1\,+\,r)\frac{2}{\tilde{\lambda}}\,h_1\right)  \\ [5pt]
&& \qquad \qquad -\, h_2 \left(\frac{4 }{\tilde{\lambda}}\,||X||_* - \mbox{rank}(X)\right)
\end{array}
\end{equation}
where $||X||_*$ is the nuclear norm of X and $\tilde{\lambda}$ is the smallest positive eigenvalue of X.
\end{proposition}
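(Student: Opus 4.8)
The plan is to rewrite the desired inequality as an upper bound on $\langle I,\hat U\rangle=\operatorname{trace}(\hat U)$, since $\langle I,U\rangle=\operatorname{trace}(U)=n-\mbox{rank}(X)$ is a fixed number. Write $r_0=\mbox{rank}(X)$ and fix a spectral decomposition $\hat X=\sum_{i=1}^n\hat\lambda_i\,\hat v_i\hat v_i^T$ with orthonormal $\hat v_i$ and $\hat\lambda_1\ge\cdots\ge\hat\lambda_n$. The reason for expanding along the eigenbasis of $\hat X$ (rather than that of $X$) is that the complementarity perturbation then enters cleanly through $\langle\hat X,\hat U\rangle=\sum_i\hat\lambda_i\,\hat v_i^T\hat U\hat v_i$, which satisfies $\langle\hat X,\hat U\rangle\le q$. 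Because $\|\hat X-X\|$ is assumed small, Weyl's inequality locates the spectrum of $\hat X$ relative to that of $X$: for $i\le r_0$ one gets $\hat\lambda_i\ge\tilde\lambda-\|\hat X-X\|=:\alpha$, while for $i>r_0$ one gets $-h_1\le\hat\lambda_i\le\|\hat X-X\|$, the left inequality being the perturbed constraint $\lambda_{\min}(\hat X)\ge-h_1$. Shrinking the neighbourhood so that $\|\hat X-X\|\le\tilde\lambda/2$ makes $\alpha\ge\tilde\lambda/2>0$, which is the source of the $2/\tilde\lambda$ factors; this is where the hypothesis that $X$ has a positive eigenvalue (so $\tilde\lambda$ is defined and $r_0\ge1$) gets used.

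Next I would split $\operatorname{trace}(\hat U)=\sum_{i\le r_0}\hat v_i^T\hat U\hat v_i+\sum_{i>r_0}\hat v_i^T\hat U\hat v_i$. Using only $\hat U\preceq(1+r)I$ (equivalently $\lambda_{\min}(I-\hat U)\ge-r$), the second sum is at most $(n-r_0)(1+r)$, which supplies both the ``free'' $n-\mbox{rank}(X)$ and the $(n-\mbox{rank}(X))r$ term in the statement. For the first sum, the mechanism is that complementarity forces $\hat U$ to be small in the directions where $\hat X$ is large. Using $\hat U\succeq-h_2I$, every quantity $\hat v_i^T\hat U\hat v_i+h_2$ is nonnegative, so for $i\le r_0$ I can write $\hat v_i^T\hat U\hat v_i=\hat\lambda_i^{-1}\hat\lambda_i(\hat v_i^T\hat U\hat v_i+h_2)-h_2$ and replace $\hat\lambda_i^{-1}$ by $\alpha^{-1}\le2/\tilde\lambda$; summing gives
\[
\sum_{i\le r_0}\hat v_i^T\hat U\hat v_i\ \le\ \frac{2}{\tilde\lambda}\Bigl(\sum_{i\le r_0}\hat\lambda_i\,\hat v_i^T\hat U\hat v_i+h_2\sum_{i\le r_0}\hat\lambda_i\Bigr)-r_0h_2,
\]
and the $-r_0h_2$ here is exactly the $+\mbox{rank}(X)h_2$ hidden inside the last bracket of the claimed bound.

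It then remains to estimate the two inner sums. For $\sum_{i\le r_0}\hat\lambda_i\,\hat v_i^T\hat U\hat v_i=\langle\hat X,\hat U\rangle-\sum_{i>r_0}\hat\lambda_i\,\hat v_i^T\hat U\hat v_i$ I would use $\langle\hat X,\hat U\rangle\le q$ together with the elementary bound $\hat\lambda_i\hat v_i^T\hat U\hat v_i\ge-\|\hat X-X\|\,h_2-h_1(1+r)$ valid for each $i>r_0$ (a sign analysis on the product of $\hat\lambda_i\in[-h_1,\|\hat X-X\|]$ with $\hat v_i^T\hat U\hat v_i\in[-h_2,1+r]$), which produces the $q$ and $h_1$ contributions. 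For $\sum_{i\le r_0}\hat\lambda_i$, Weyl's inequality gives $\sum_{i\le r_0}\hat\lambda_i\le\|X\|^*+r_0\|\hat X-X\|$, which produces the $\frac{2}{\tilde\lambda}\|X\|^*h_2$ piece. Collecting everything, all coefficients should match the statement except for leftover multiples of $\|\hat X-X\|\,h_2$; since $\|X\|^*>0$, taking $\|\hat X-X\|$ small enough (say $\le\|X\|^*/n$) absorbs them into the $\frac{4}{\tilde\lambda}\|X\|^*h_2$ coefficient. Substituting into $\langle I,U\rangle-\langle I,\hat U\rangle=(n-r_0)-\operatorname{trace}(\hat U)$ and rearranging then yields the asserted inequality (and hence, since the right-hand side is $-\mu\,\|(p,q,r,h_1,h_2)\|$ for a constant $\mu$ depending only on $n$, $\tilde\lambda$, $\|X\|^*$, $\mbox{rank}(X)$, Clarke calmness of $(X,U)$).

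I expect the main obstacle to be the bookkeeping of the $\|\hat X-X\|$-dependent remainders: they appear in the lower bound $\alpha$ on the large eigenvalues, in the tail estimate of $\sum_{i>r_0}\hat\lambda_i\hat v_i^T\hat U\hat v_i$, and in $\sum_{i\le r_0}\hat\lambda_i\le\|X\|^*+r_0\|\hat X-X\|$, and one must check that their combination is genuinely dominated by the perturbation-linear terms, so that no term proportional to $\|\hat X-X\|$ survives in the final estimate. The other point to get right is the decision to diagonalise $\hat X$ rather than $X$, so that the available constraint $|\langle\hat X,\hat U\rangle|\le q$, and not an unavailable bound on $\langle X,\hat U\rangle$, drives the argument.
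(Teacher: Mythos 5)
Your proposal is correct, and it reaches the paper's bound by a genuinely different route. The paper fixes $\hat X$, poses the minimization of $n-\langle I,\tilde U\rangle$ over $\tilde U$ as an auxiliary SDP, passes to its Lagrangian dual, tightens the dual to a diagonal problem in the eigenbasis of $\hat X$, and exhibits an explicit dual feasible point ($y_1=0$, $y_2=-2/\tilde\lambda$, with $f_i,g_i$ chosen according to whether $\hat\lambda_i<\tilde\lambda/2$ or $\hat\lambda_i\geq\tilde\lambda/2$), so that weak duality delivers the lower bound on $n-\langle I,\hat U\rangle$. You work entirely on the primal side: expanding $\operatorname{trace}(\hat U)$ along the eigenvectors of $\hat X$, using Weyl's inequality to split the indices at $\mbox{rank}(X)$, and bounding each quadratic form $\hat v_i^T\hat U\hat v_i$ with $\hat U\preceq(1+r)I$, $\hat U\succeq -h_2I$, $\hat\lambda_i\geq -h_1$ and $|\langle\hat X,\hat U\rangle|\leq q$. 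The two arguments are mirror images — your weight $2/\tilde\lambda$ and the per-direction estimates are exactly the paper's dual certificate read primally, which is why the constants coincide — but yours avoids setting up problems (\ref{XFixedPSDPerturbed})--(\ref{TXFixedPSDPerturbedDual}) and invoking duality, and it is more explicit about the $\|\hat X-X\|$-dependent remainders (the paper disposes of these with statements such as $\sum_{\hat\lambda_i\geq\tilde\lambda/2}\hat\lambda_i\leq 2\|X\|^*$ and ``exactly $n-\mbox{rank}(X)$ eigenvalues converge to $0$'' for $\|X-\hat X\|$ small, whereas you track them and absorb them using $\|\hat X-X\|\leq\|X\|^*/n$, legitimate since $\|X\|^*>0$ under the positive-eigenvalue assumption). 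What the paper's dual framing buys in exchange is the observation that any dual feasible point gives such a bound, which makes the freedom in choosing the constants more visible; your framing buys a shorter, self-contained, purely linear-algebraic proof.
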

\begin{proof}
The general scheme is to determine a lower bound for  $ \langle I, U\rangle $ and an upper bound for $\langle I, \hat{U}\rangle $. A lower bound of $\langle I, U\rangle $ can be easily found by exploiting the complementarity constraints and its value is $n - \mbox{rank}(X)$. To find an upper bound of $\langle I, \hat{U}\rangle $, the approach we take is to fix $\hat{X}$ in Problem (\ref{CompleRankPSDPerturbed}) and estimate a lower bound for the objective value of the following problem:
\begin{equation} \label{XFixedPSDPerturbed}
\begin{array}{llr}
\displaystyle{
{\operatornamewithlimits{\mbox{minimize}}_{\tilde{U} \, \in \, \mathbb{S}^n}}
} &n\,-\,\langle I,\,\tilde{U}\rangle \\ [0.15in]
\mbox{subject to}%
&- \langle \hat{X},\,\tilde{U}\rangle \,\leq\,q, & \qquad y_1\\[0.15in]
& \langle \hat{X},\,\tilde{U}\rangle \,\leq\,q, &\qquad y_2\\[0.15in]
& I\,-\,\tilde{U}\,\succeq\,-r\, I, & \qquad \Omega_1\\[0.15in]
& \tilde{U}\,\succeq\, -h_2\, I, & \qquad \Omega_2
\end{array}
\end{equation}
where $y_1$, $y_2$, $\Omega_1$ and $\Omega_2$ are the Lagrangian multipliers for the corresponding constraints. It is obvious that $(\hat{X}, \hat{U})$  must be feasible to
Problem~(\ref{XFixedPSDPerturbed}).
We find an upper bound for $(I, \hat{U} )$ by finding
a feasible solution to the dual problem of Problem (\ref{XFixedPSDPerturbed}), which is:
\begin{equation} \label{XFixedPSDPerturbedDual}
\begin{array}{ll}
\displaystyle{
{\operatornamewithlimits{\mbox{maximize}}_{y_1,y_2\in \mathbb{R},\,\Omega_1, \Omega_2 \, \in \, \mathbb{S}^n}}
} &n\,+\,q\,y_1\,+\,q\,y_2\,-\,(1\,+\,r)\,trace(\Omega_1)\,-\,h_2\,trace(\Omega_2)\\ [0.15in]
\mbox{subject to}%
& -y_1\,\hat{X}\,+\,y_2\,\hat{X}\,-\,\Omega_1\,+\,\Omega_2\,=\,-I\\[0.15in]
&y_1,\,y_2\,\leq\,0\\[0.15in]
&\Omega_1,\,\Omega_2\,\succeq\,0
\end{array}
\end{equation}
We can find a lower bound on the dual objective value by looking at a tightened version, which is established by diagonalizing $\hat{X}$ by linear transformation and restricting the non-diagonal term of $\Omega_1$ and $\Omega_2$  to be 0. Let $\{f_i\}$, $\{g_i\}$ be the entries on the diagonal of $\Omega_1$ and $\Omega_2$ after the transformation respectively, and  $\{\hat{\lambda}_i\}$ be the  eigenvalues of $\hat{X}$. The tightened problem is:
\begin{equation} \label{TXFixedPSDPerturbedDual}
\begin{array}{ll}
\displaystyle{
{\operatornamewithlimits{\mbox{maximize}}_{y_1,y_2\in \mathbb{R},\,f,\,g \, \in \, \mathbb{R}^n}}
} &n\,+\,q\,y_1\,+\,q\,y_2\,-\,(1\,+\,r)\,\sum_i f_i \,-\,h_2\,\sum_i g_i\\ [0.15in]
\mbox{subject to}%
& -y_1\,\hat{\lambda}_i\,+\,y_2\,\hat{\lambda}_i\,-\,f_i\,+\,g_i\,=\,-1,\,\forall i\,=\,1\cdots n\\[0.15in]
&y_1,\,y_2\,\leq\,0\\[0.15in]
&f_i,\,g_i\,\geq\,0,\,\forall i\,=\,1\cdots n
\end{array}
\end{equation}
By proper assignment of the value of $y_1, y_2, f, g$, we can construct a feasible solution to the tightened problem and give a lower bound for the optimal objective of the dual problem.
Let $\{\lambda_i\}$ be the set of eigenvalues of~$X$,
with $\tilde{\lambda}$ the smallest positive eigenvalue, and set:
\begin{equation*}
y_1=0~ \mbox{and} ~y_2=-\frac{2}{\tilde{\lambda}}
\end{equation*}
For  f and g:
\begin{itemize}
\item if $\hat{\lambda}_i < \frac{\tilde{\lambda}}{2}$, take $f_i=1+y_2\hat{\lambda}_i$, $g_i=0$.
\item if $\hat{\lambda}_i \geq \frac{\tilde{\lambda}}{2}$, take $f_i=0$ and
$g_i=\frac{2}{\tilde{\lambda}}\hat{\lambda}_i-1$.
\end{itemize}
It is trivial to see that the above assignment will yield a feasible solution to
Problem~(\ref{TXFixedPSDPerturbedDual}) and hence a lower bound for the dual objective is:
\begin{equation}
n\,-\frac{2q}{\tilde{\lambda}}\,-\,\sum\limits_{\hat{\lambda}_i < \frac{\tilde{\lambda}}{2}}(1\,+\,r)(1+y_2\hat{\lambda}_i)
\,-\,h_2\,\sum\limits_{\hat{\lambda}_i \geq \frac{\tilde{\lambda}}{2}} (\frac{2}{\tilde{\lambda}}\hat{\lambda}_i-1)
\end{equation}
 By weak duality the primal objective value must be greater or equal to the dual objective value, thus:
\begin{equation*}
n - \langle I, \hat{U}\rangle \,\geq\,n\,-\frac{2q}{\tilde{\lambda}}\,-\,\sum\limits_{\hat{\lambda}_i < \frac{\tilde{\lambda}}{2}}(1\,+\,r)(1+y_2\hat{\lambda}_i)\,-\,h_2\,\sum\limits_{\hat{\lambda}_i \geq \frac{\tilde{\lambda}}{2}} (\frac{2}{\tilde{\lambda}}\hat{\lambda}_i-1).
\end{equation*}
Since we can write
\begin{equation*}
n - \langle I, U\rangle  \,=\,n\,-\,\sum\limits_{\lambda_i=0} 1,
\end{equation*}
it follows that for $||U-\hat{U}||$ sufficiently small we have
\begin{equation}\label{BoundOnU}
\begin{array}{ll}
(n-\langle I, \hat{U}\rangle )\,-\,(n-\langle I, U\rangle )  %
&\,\geq\,n\,-\frac{2q}{\tilde{\lambda}}\,-\,\sum\limits_{\hat{\lambda}_i < \frac{\tilde{\lambda}}{2}}(1\,+\,r)(1+y_2\hat{\lambda}_i)\,\\
&\qquad -\,h_2\,\sum\limits_{\hat{\lambda}_i \geq \frac{\tilde{\lambda}}{2}} (\frac{2}{\tilde{\lambda}}\hat{\lambda}_i-1)\,-\,(n\,-\,\sum\limits_{\lambda_i=0} 1)\\[0.25in]
&\,=\,-\frac{2q}{\tilde{\lambda}}\,-\,\sum\limits_{\hat{\lambda}_i < \frac{\tilde{\lambda}}{2}}(r+(1\,+\,r)y_2\hat{\lambda}_i)\\[0.15in]
&\qquad -\,h_2\,\sum\limits_{\hat{\lambda}_i \geq \frac{\tilde{\lambda}}{2}} (\frac{2}{\tilde{\lambda}}\hat{\lambda}_i-1).
\end{array}
\end{equation}
For $\hat{\lambda}_i < \frac{\tilde{\lambda}}{2}$, by the constraints $\hat{\lambda}_i\geq -h_1$ and setting $y_2 = -\frac{2}{\tilde{\lambda}}$, we have:
\begin{equation*}
r+(1\,+\,r)y_2\hat{\lambda}_i\,\leq\,r+(1\,+\,r)\frac{2 h_1}{\tilde{\lambda}}.
\end{equation*}
For $\hat{\lambda}_i \geq \frac{\tilde{\lambda}}{2}$, recall the definition for nuclear norm and we have:
\begin{equation*}
\sum\limits_{\hat{\lambda}_i \geq \frac{\tilde{\lambda}}{2}}\hat{\lambda}_i\,\leq\,2||X||_*
\end{equation*}
for $||X-\hat{X}||$ sufficiently small.
 Since there are exactly $n-\mbox{rank}(X)$ eigenvalues in $\hat{X}$ that converge to 0, we can simplify the above inequality(\ref{BoundOnU}) and have:
\begin{equation}
\begin{array}{ll}
(n-\langle I, \hat{U}\rangle )\,-\,(n-\langle I, U\rangle )  %
& \geq\,-\frac{2q}{\tilde{\lambda}}\,-\,(n-\mbox{rank}(X))(r+(1\,+\,r)\frac{2h_1}{\tilde{\lambda}})\\[0.15in]
&\qquad -\,h_2 \left(\frac{4 }{\tilde{\lambda}}\,||X||_* - \mbox{rank}(X)\right).
\end{array}
\end{equation}
Thus we can prove the inequality.
\end{proof}

There is one case that is not covered by Proposition~\ref{prop:calm},
namely that $X=0$. This is also calm, as we show in the next lemma.

\begin{lemma}  \label{lemma:calmX0}
Assume $X=0$ is feasible in (\ref{CompleRankPSD}), with $U$ given 
by~(\ref{AssignU}).
Let $(\hat{X}, \hat{U})$ be a feasible point to~(\ref{CompleRankPSDPerturbed}).
We have
\begin{displaymath}
(n-\langle I,\hat{U}\rangle ) \,  - \, (n-\langle I, U\rangle ) \, \geq \, -nr.
\end{displaymath}
\end{lemma}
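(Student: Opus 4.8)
The plan is to exploit the fact that in the degenerate case $X=0$ the assignment~(\ref{AssignU}) forces $U$ to equal the identity, so the quantity $n-\langle I,U\rangle$ collapses to zero, and then a single one of the perturbed constraints in~(\ref{CompleRankPSDPerturbed}) already suffices to bound $\langle I,\hat U\rangle$ from above. None of the other perturbations $p$, $q$, $h_1$, $h_2$ will be needed, and indeed no smallness hypotheses on them are required.

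First I would observe that when $X=0$, in the eigenvalue decomposition~(\ref{XEig}) every eigenvalue is zero, so the matrix $P_0$ of columns corresponding to zero eigenvalues is all of $P$; hence $U = P_0P_0^T = P^TP = I$ by orthogonality of $P$. Consequently $\langle I,U\rangle = \operatorname{trace}(I) = n$, so the right-hand quantity $n-\langle I,U\rangle$ equals $0$, and it remains only to prove $n-\langle I,\hat U\rangle \ge -nr$, i.e. $\langle I,\hat U\rangle \le n(1+r)$.

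Next I would use the perturbed constraint $\lambda_{min}(I-\hat U) \ge -r$ from~(\ref{CompleRankPSDPerturbed}), which is equivalent to $(1+r)I - \hat U \succeq 0$. Since the trace of a positive semidefinite matrix is nonnegative (monotonicity of the trace with respect to the semidefinite partial order), taking the trace of $(1+r)I - \hat U \succeq 0$ gives $n(1+r) - \langle I,\hat U\rangle \ge 0$, i.e. $\langle I,\hat U\rangle \le n(1+r)$. Rearranging yields $n-\langle I,\hat U\rangle \ge -nr$, and combined with the previous step this is exactly the claimed inequality.

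There is essentially no obstacle here; the only point requiring care is recognizing that~(\ref{AssignU}) produces $U=I$ precisely when $X=0$, which is why this case is treated separately from Proposition~\ref{prop:calm}, whose bound divides by the smallest positive eigenvalue $\tilde\lambda$ of $X$ and is therefore vacuous when $X$ has no positive eigenvalue.
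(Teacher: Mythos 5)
Your proof is correct and follows essentially the same route as the paper: note that (\ref{AssignU}) gives $U=I$ (hence $\langle I,U\rangle=n$) when $X=0$, and then use the constraint $\lambda_{\min}(I-\hat{U})\geq -r$, i.e.\ every eigenvalue of $\hat{U}$ is at most $1+r$, to bound $\langle I,\hat{U}\rangle\leq n(1+r)$. The paper's proof is just a terser statement of exactly these two observations, so there is nothing to add beyond the (harmless) typo $P_0P_0^T=P^TP$, which should read $PP^T$.
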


\begin{proof}
Note that $\langle I,U\rangle = n$, since $X=0$ and $U$ satisfies~(\ref{AssignU}).
In addition, each eigenvalue of~$\hat{U}$ is no larger than~$1+r$,
so the result follows.
\end{proof}

\begin{proposition}\label{KCalmness}
Calmness holds at each $(X, U)$  provided
(i) $X$ is a local minimizer of~$(RC(k))$ for some choice of~$k$
and (ii) $U$ is given by (\ref{AssignU}).
\end{proposition}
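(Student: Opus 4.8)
The plan is to assemble Proposition~\ref{prop:calm} and Lemma~\ref{lemma:calmX0} into a verification of the Clarke calmness definition for Problem~(\ref{CompleRankPSD}). First I would reconcile the two forms of constraint perturbation: the abstract definition perturbs $h(x)+r=0$, $g(x)+s\le 0$, $G(x)+P\in K$, whereas Problem~(\ref{CompleRankPSDPerturbed}) is written with explicit perturbations $p$ (in $\tilde{\mathcal{C}}$), $q$ (in the complementarity $|\langle\hat X,\hat U\rangle|\le q$), $r$ (in $I-\hat U\succeq -rI$), and $h_1,h_2$ (in $\hat X\succeq -h_1 I$, $\hat U\succeq -h_2 I$). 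The key observation is that any feasible point $x$ of the generic perturbed system with perturbation norm $\le\epsilon$ gives rise, after collecting terms, to a feasible point $(\hat X,\hat U)$ of~(\ref{CompleRankPSDPerturbed}) with $\|(p,q,r,h_1,h_2)\|$ bounded by a fixed multiple of the generic perturbation norm; so it suffices to exhibit $\mu,\epsilon>0$ with $f(\hat X,\hat U)-f(X,U)+\mu\|(p,q,r,h_1,h_2)\|\ge 0$ for all small perturbations and all $(\hat X,\hat U)$ close to $(X,U)$. Since $f(X,U)=n-\langle I,U\rangle$, this inequality is exactly $\langle I,U\rangle-\langle I,\hat U\rangle\ge -\mu\|(p,q,r,h_1,h_2)\|$.

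Next I would split into the two cases already handled. If $X$ has at least one positive eigenvalue, Proposition~\ref{prop:calm} gives, for all sufficiently small perturbations and all $(\hat X,\hat U)$ near $(X,U)$,
\begin{displaymath}
\langle I,U\rangle-\langle I,\hat U\rangle \;\geq\; -\frac{2q}{\tilde\lambda} \,-\, \bigl(n-\operatorname{rank}(X)\bigr)\Bigl(r+(1+r)\frac{2h_1}{\tilde\lambda}\Bigr) \,-\, h_2\Bigl(\frac{4}{\tilde\lambda}\|X\|^*-\operatorname{rank}(X)\Bigr).
\end{displaymath}
For $r$ small (say $r\le 1$), the right-hand side is bounded below by $-\mu\,(q+h_1+h_2+r)$ for a constant $\mu$ depending only on $n$, $\tilde\lambda$, $\operatorname{rank}(X)$, and $\|X\|^*$ — all fixed once $X$ is fixed — and hence by $-\mu'\|(p,q,r,h_1,h_2)\|$ for a suitable $\mu'$. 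If instead $X=0$, Lemma~\ref{lemma:calmX0} gives directly $\langle I,U\rangle-\langle I,\hat U\rangle\ge -nr\ge -n\|(p,q,r,h_1,h_2)\|$. In both cases the perturbation $p$ in $\tilde{\mathcal{C}}$ plays no role in the bound, which is fine: calmness only requires the objective gap to be controlled by the norm of the full perturbation vector, and a bound in terms of a subset of the components is a fortiori a bound in terms of the whole. Taking $\mu$ to be the larger of the two constants and $\epsilon$ small enough that both Proposition~\ref{prop:calm} and Lemma~\ref{lemma:calmX0} apply completes the verification.

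I do not expect a serious obstacle here, since the analytic content is entirely contained in Proposition~\ref{prop:calm} and Lemma~\ref{lemma:calmX0}; the one point requiring care is the bookkeeping in the first paragraph — checking that the eigenvalue-type perturbations $\lambda_{\min}(\hat X)\ge -h_1$ etc.\ in~(\ref{CompleRankPSDPerturbed}) are genuinely implied (up to a constant factor in norm) by additive matrix perturbations $G(x)+P\in\mathbb{S}^n_+$ of the form used in the generic definition, via Weyl's inequality $|\lambda_{\min}(\hat X)-\lambda_{\min}(\hat X+P)|\le\|P\|$. Once that translation is in hand, the bound $\mu$ is uniform over a neighborhood of the fixed point $(X,U)$ because every quantity entering it ($n$, $\operatorname{rank}(X)$, $\tilde\lambda$, $\|X\|^*$) depends only on $X$, not on the nearby feasible point $(\hat X,\hat U)$, so the same $\mu$ and $\epsilon$ work throughout. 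Hence calmness holds at each such $(X,U)$, and by Theorem~\ref{thm:calmkkt} every such point is a KKT point, which is Theorem~\ref{thm:KKTsdcmpcc}.
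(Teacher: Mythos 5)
Your proposal is correct and follows the same route as the paper, whose proof is simply the one-line observation that the result follows from Proposition~\ref{prop:calm} (when $X$ has a positive eigenvalue) and Lemma~\ref{lemma:calmX0} (when $X=0$). Your additional bookkeeping---translating the generic Clarke perturbations into $(p,q,r,h_1,h_2)$ via Weyl's inequality and absorbing the bound into a single constant $\mu$---is exactly the detail the paper leaves implicit, so there is no substantive difference.
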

\begin{proof}
This follows directly from Proposition~\ref{prop:calm}, Lemma~\ref{lemma:calmX0},
and the Lipschitz continuity assumption on~$\psi(X)$.
\end{proof}

It follows that any local minimizer of the SDCMPCC formulation of the rank
minimization problem is a KKT point. Note that 
no assumptions are necessary regarding~${\mathcal C}$.
\begin{theorem}  \label{thm:KKTsdcmpcc}
The KKT condition of Theorem~\ref{thm:calmkkt}
holds at each local optimum of Problem~(\ref{CompleRankPSD}).
\end{theorem}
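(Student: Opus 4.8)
The plan is to reduce the statement to two facts already established: Proposition~\ref{KCalmness}, which gives calmness at every feasible pair $(X,U)$ in which $U$ is given by~(\ref{AssignU}), and Theorem~\ref{thm:calmkkt}, by which calmness at a local minimizer forces the stated KKT system. The missing link is the claim that an \emph{arbitrary} local optimum $(\bar X,\bar U)$ of~(\ref{CompleRankPSD}) necessarily has $\bar U$ of the form~(\ref{AssignU}); granting this, the theorem follows by chaining the two results.

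First I would prove this characterization. Let $(\bar X,\bar U)$ be any feasible point of~(\ref{CompleRankPSD}). From $\langle\bar X,\bar U\rangle=0$ with $\bar X,\bar U\succeq 0$ we get $\bar X\bar U=0$, so $\Range(\bar U)\subseteq\ker\bar X$; with $0\preceq\bar U\preceq I$ this gives $\langle I,\bar U\rangle=\operatorname{trace}(\bar U)\le\dim\ker\bar X=n-\operatorname{rank}(\bar X)$. I claim that at a local optimum this holds with equality. If not, $\bar U$ acts on $\ker\bar X$ with some eigenvalue $\mu<1$; for a corresponding unit eigenvector $v\in\ker\bar X$ and small $\varepsilon>0$ with $\mu+\varepsilon\le 1$, the matrix $\bar U':=\bar U+\varepsilon v v^T$ is still positive semidefinite, still $\preceq I$, and still has range in $\ker\bar X$, hence still satisfies $\langle\bar X,\bar U'\rangle=0$. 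Keeping $X=\bar X$ fixed, $(\bar X,\bar U')$ is then feasible for~(\ref{CompleRankPSD}), lies within $\varepsilon$ of $(\bar X,\bar U)$, and has strictly larger $\langle I,\cdot\rangle$, contradicting local optimality. Equality $\operatorname{trace}(\bar U)=\dim\ker\bar X$ together with $0\preceq\bar U\preceq I$ and $\Range(\bar U)\subseteq\ker\bar X$ then forces $\bar U$ to be the orthogonal projection onto $\ker\bar X$, which is exactly the matrix $P_{0}P_{0}^T$ of~(\ref{AssignU}).

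With the characterization in place the remainder is mechanical: Proposition~\ref{KCalmness} --- through Proposition~\ref{prop:calm} when $\bar X$ has a positive eigenvalue and through Lemma~\ref{lemma:calmX0} when $\bar X=0$ --- gives calmness at $(\bar X,\bar U)$, and Theorem~\ref{thm:calmkkt} then yields the KKT conditions there. No assumption on $\tilde{\mathcal{C}}$ is needed, since the perturbation above fixes $X=\bar X$ and hence preserves feasibility with respect to $\tilde{\mathcal{C}}$ automatically. I expect the only point needing care to be this perturbation argument: one must verify that raising a sub-unit eigenvalue of $\bar U$ along an eigenvector in $\ker\bar X$ keeps the perturbed pair feasible for~(\ref{CompleRankPSD}) --- in particular that complementarity with $\bar X$ is preserved, which is precisely why one perturbs only $\bar U$, and only in directions lying in $\ker\bar X$ --- and that $\varepsilon$ can be taken arbitrarily small so that local optimality is genuinely contradicted.
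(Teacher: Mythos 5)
Your proof is correct and follows the same overall strategy as the paper---calmness (Proposition~\ref{KCalmness}) chained with Theorem~\ref{thm:calmkkt}---but you supply an intermediate step that the paper's one-line proof leaves implicit. The paper cites Propositions~\ref{prop:localopt} and~\ref{KCalmness}; as stated, Proposition~\ref{prop:localopt} gives only one direction (every pair $(X,U)$ with $U$ as in~(\ref{AssignU}) is a local optimum), whereas the theorem concerns an \emph{arbitrary} local optimum $(\bar X,\bar U)$, and calmness has been proved only at points whose $U$-component has the form~(\ref{AssignU}). Your characterization lemma---that at any local optimum $\bar U$ must be the orthogonal projection onto $\ker\bar X$---is exactly the missing converse, and your perturbation argument for it is sound: $\bar X\bar U=0$ makes $\ker\bar X$ invariant under $\bar U$ and makes $\bar U$ vanish on $\operatorname{Range}(\bar X)$, so if $\operatorname{trace}(\bar U)<\dim\ker\bar X$ there is a unit eigenvector $v\in\ker\bar X$ of $\bar U$ with eigenvalue $\mu<1$, and $\bar U+\varepsilon vv^T$ (with $\mu+\varepsilon\le 1$) stays feasible, stays within $\varepsilon$ of $\bar U$, and strictly increases $\langle I,U\rangle$, contradicting local optimality; equality in the trace bound then forces $\bar U=P_0P_0^T$. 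Your remark that the perturbation fixes $X=\bar X$, so no assumption on $\tilde{\mathcal{C}}$ is needed, matches the paper's own comment. In short, your route buys a fully rigorous justification that the calmness results actually apply at every local optimum, at the cost of one extra (easy) lemma; the paper's proof is terser and relies on the reader supplying this identification of local optima with points of the form~(\ref{AssignU}).
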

\begin{proof}
This is a direct result from Theorem \ref{thm:calmkkt} and Propositions
\ref{prop:localopt} and~\ref{KCalmness}.
\end{proof}

The above results show that, similar to the exact complementarity formulation of $\ell_0$ minimization,
there are  many KKT stationary points in the exact SDCMPCC formulation of rank minimization,
so it is possible that an algorithm will terminate at some stationary point that might be far from
a global optimum.
As we have shown in the complementarity formulation for $\ell_0$ minimization
problem~\cite{FMPSWachter13},
a possible approach to overcome this difficulty is to relax the complementarity constraints.
In the following sections we investigate whether this approach works for the
SDCMPCC formulation.

\section{A Penalty Scheme for SDCMPCC Formulation   \label{sec:penalty}}

In this section and the following sections,
we present  a penalty scheme for the original SDCMPCC formulation.
The penalty formulation has the form:
\begin{equation} \label{PenaltyCompleRankPSD}
\begin{array}{ll}
\displaystyle{
{\operatornamewithlimits{\mbox{minimize}}_{X, U \, \in \, \mathbb{S}^n}}
} &n\,-\,\langle I,\,U\rangle  + \psi(X) + \rho \langle X,\,U\rangle \, =: \, f_{\rho}(X,U)  \\ [0.15in]
\mbox{subject to} & X \in \tilde{\mathcal{C}}\\[0.15in]
& 0\,\preceq\,I\,-\,U\\[0.15in]
& X\,\succeq\, 0\\[0.15in]
& U\,\succeq\,0
\end{array}
\end{equation}
We denote the problem as $SDCPNLP(\rho)$.
We discuss properties of the formulation in this section,
with an algorithm described in Section~\ref{sec:PALM}
and computational results given in Section~\ref{sec:comp}.
First, we note that it follows from standard results that a sequence of global minimizers
to (\ref{PenaltyCompleRankPSD}) converges to a global optimizer
of~(\ref{CompleRankPSD}); see Luenberger~\cite{LuenOld} for example.

From now on, we assume
\begin{displaymath}
\tilde{\mathcal{C}} \, = \, \{ X \in \mathbb{S}^n \, : \, \langle A_i, X \rangle = b_i, \, i=1,\ldots,m_2 \},
\end{displaymath}
where each $A_i \in \mathbb{S}^n$,
so $\tilde{\mathcal{C}}$ is an affine manifold.

\subsection{Constraint Qualification of Penalty Formulation}

The penalty formulation for the Rank Minimization problem is a nonlinear semidefinite program.
As with the exact SDCMPCC formulation, we would like to investigate whether algorithms for general 
nonlinear semidefinite programming problems can be applied to solve the penalty formulation.
As far as we know, most algorithms in nonlinear semidefinite programming use
first order KKT stationary conditions as the criteria for termination.
The KKT stationary condition at a local optimum of the penalty formulation is:
\begin{equation}\label{PenaltyKKTCondition}
\begin{array}{ll}
0\,\preceq\,U&\,\perp\,-I\,+\,\Psi \, + \, \rho X\,+\,Y\,\succeq 0 \\[0.15in]
0\,\preceq\,X&\,\perp\,\sum\lambda_i A_i\,+\,\rho\, U\,\succeq 0 \\[0.15in]
0\,\preceq\,Y&\,\perp\,I- U\,\succeq 0
\end{array}
\end{equation}
for $X\,\in\,\tilde{\mathcal{C}}$, where $\lambda \in \real^{m_2}$ are the dual multipliers corresponding
to the linear constraints  $Y \in \mathbb{S}^n_+$ are the dual multipliers
corresponding to the constraints $I-U \succeq 0$,
and $\Psi$ is a subgradient of $\psi(X)$.
Unfortunately, the counterexample below shows that the KKT conditions do not hold in
the penalty problem (\ref{PenaltyCompleRankPSD}) in general:
\begin{equation} \label{FailureofCalm2}
\begin{array}{ll}
\displaystyle{
{\operatornamewithlimits{\mbox{minimize}}_{X, U \, \in \, \mathbb{S}^3, x\,\in\,\mathbb{R}}}
} &3\,-\,\langle I,\,U\rangle \,+\,\rho\,\langle X, U\rangle \\ [0.15in]
\mbox{subject to} & X \,=\,\left[\begin{array}{clcr}
3+x&0&0\\
0&1-x&\frac{x}{2}\\
0&\frac{x}{2}&0\end{array}\right]\\[0.15in]
& 0\,\preceq\,I\,-\,U\\[0.15in]
& X\,\succeq\, 0\\[0.15in]
& U\,\succeq\,0
\end{array}
\end{equation}
Every feasible solution requires $x=0$.
It is obvious that if $\rho=0.5$ then the optimal solution to the above problem is:
\begin{equation*}
\bar{x}\,=\,0,\qquad \bar{X}\,=\,\left[\begin{array}{clcr}
3&0&0\\
0&1&0\\
0&0&0\end{array}\right]\qquad\mbox{and}\qquad \bar{U}\,=\,\left[\begin{array}{rrr}
0&0&0\\
0&1&0\\
0&0&1\end{array}\right]
\end{equation*}
and the optimal objective value is $1.5$.
However, there does not exist any KKT multiplier at this point.
We can show explicitly that calmness is violated at the current point. If we allow
$\lambda_{min}(X)\,\geq\,-h_1$ and $\lambda_{\max}(U) \leq 1+h_1$, then we can take
\begin{equation*}
\begin{array}{ll}
&x\,=\,\sqrt{h_1},\qquad X\,=\,\left[\begin{array}{clcr}
3+\sqrt{h_1}&0&0\\
0&1-\sqrt{h_1}&\sqrt{h_1}/2\\
0&\sqrt{h_1}/2&0\end{array}\right]  \\[0.25in]
&
\qquad \mbox{and}\qquad U\,=\,\left[\begin{array}{rrr}
0&0&0\\
0&1&-{h_1}\\
0&-{h_1}&1\end{array}\right]  .
\end{array}
\end{equation*}
It is obvious that $(x, X, U)\rightarrow(\bar{x},\bar{X},\bar{U})$ as $h_1\rightarrow 0$.
The resulting objective value at $(x, X, U)$ is
$1.5-0.5\sqrt{h_1}-0.5h_1^{1.5}$.
Thus, for small $h_1$, the difference in objective function value is $O(\sqrt{h_1})$,
while the perturbation in the constraints is only~$O(h_1)$,
so calmness does not hold.

Lack of constraint qualification indicates that algorithms such as the Augmented Lagrangian method
may not converge in general if applied to the penalty formulation.
However, if we enforce a Slater constraint qualification on the feasible region of~$X$,
we show below that calmness will hold in the penalty problem~(\ref{PenaltyCompleRankPSD})
at  local optimal points.
\begin{proposition}
Calmness holds at the local optima of the penalty formulation~(\ref{PenaltyCompleRankPSD}) if
$\cal{C}$ contains a positive definite matrix.
\end{proposition}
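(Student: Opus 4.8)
The plan is to exploit the fact that in the penalty reformulation the complementarity has been moved out of the constraints and into the objective, so that every remaining constraint of (\ref{PenaltyCompleRankPSD}) is affine. Under the Slater-type hypothesis one can then verify the Robinson constraint qualification (the conic counterpart of MFCQ) at \emph{every} feasible point of~(\ref{PenaltyCompleRankPSD}), and then invoke the standard implication ``Robinson CQ $\Rightarrow$ calmness'', which is the conic analogue of the ``MFCQ $\Rightarrow$ calmness'' fact already used above. Since calmness is exactly what is needed, this proves the proposition (in fact at every feasible point, not only at local optima).

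In detail, I would first rewrite the feasible set of~(\ref{PenaltyCompleRankPSD}) as the solution set of the affine equalities $\langle A_i,X\rangle=b_i$, $i=1,\dots,m_2$, together with the conic inclusion $(X,\,U,\,I-U)\in\mathbb{S}^n_+\times\mathbb{S}^n_+\times\mathbb{S}^n_+$, noting that $(X,U)\mapsto(X,U,I-U)$ is linear. After discarding redundant equations (which leaves the feasible set and the notion of calmness unchanged) we may assume the $A_i$ are linearly independent, so that $D\mapsto(\langle A_i,D\rangle)_{i=1}^{m_2}$ is onto $\mathbb{R}^{m_2}$. It then remains to exhibit, at a feasible $(\bar X,\bar U)$, a direction $(D_X,D_U)$ with $\langle A_i,D_X\rangle=0$ for all $i$ and with $(\bar X+D_X,\ \bar U+D_U,\ I-\bar U-D_U)$ lying in the interior $\mathbb{S}^n_{++}\times\mathbb{S}^n_{++}\times\mathbb{S}^n_{++}$ of the cone. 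This is where the hypothesis is used: taking $X^0\in\tilde{\mathcal{C}}$ with $X^0\succ0$, set $D_X:=X^0-\bar X$ and $D_U:=\frac{1}{2}I-\bar U$. Then $\langle A_i,D_X\rangle=\langle A_i,X^0\rangle-\langle A_i,\bar X\rangle=b_i-b_i=0$, while $\bar X+D_X=X^0\succ0$ and $0\prec\bar U+D_U=\frac{1}{2}I\prec I$, so Robinson's condition holds; since the choice of $\bar X$ played no role, it holds at every feasible point.

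Finally I would close with the standard chain: Robinson CQ implies metric regularity of the constraint system, which, together with smoothness (hence local Lipschitzness, say with constant $\Lambda$) of $f_\rho$ and local optimality of $(\bar X,\bar U)$, yields Clarke calmness. Concretely, metric regularity gives a constant $L$ such that any $(X,U)$ feasible for the $\delta$-perturbed constraints and close to $(\bar X,\bar U)$ lies within $L\delta$ of an honestly feasible pair $(X',U')$, whence $f_\rho(X,U)-f_\rho(\bar X,\bar U)\ \ge\ f_\rho(X',U')-f_\rho(\bar X,\bar U)-\Lambda L\delta\ \ge\ -\Lambda L\delta$, which is the calmness inequality with $\mu=\Lambda L$. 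The main obstacle here is essentially bookkeeping rather than mathematics: making sure the affine-equality block does not spoil Robinson CQ (handled by the linear-independence reduction) and citing ``Robinson CQ $\Rightarrow$ calmness'' in the conic setting with the same status as the NLP fact invoked earlier. A direct alternative, closer in style to the proof of Proposition~\ref{prop:calm}, would be to fix $\hat X$, use the Slater point to project it onto $\tilde{\mathcal{C}}\cap\mathbb{S}^n_+$ at a cost linear in the perturbation (the step that fails for the single-point feasible region of the counterexample~(\ref{FailureofCalm2})), and then bound the inner $U$-subproblem by weak SDP duality as before; but the Robinson-CQ route is cleaner.
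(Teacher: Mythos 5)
Your proposal is correct, and its skeleton (a linear-rate feasibility-restoration bound, plus local Lipschitzness of $f_\rho$ and local optimality of $(\bar X,\bar U)$) matches the paper's, but the middle step is obtained by a genuinely different route. The paper does not mention Robinson's CQ at all: it constructs the nearby honestly feasible point explicitly, taking a strictly feasible $(\hat X,\hat U)$ with minimum eigenvalue $\delta>0$ and moving the perturbed point $(X_l,U_l)$ (minimum eigenvalue $-\epsilon$) the fraction $\frac{\epsilon}{\delta+\epsilon}$ of the way toward $(\hat X,\hat U)$, which restores semidefiniteness at cost $O(\epsilon)$ in $f_\rho$; local optimality then closes the argument. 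You instead verify Robinson's CQ at every feasible point (your direction $D_X=X^0-\bar X$, $D_U=\tfrac12 I-\bar U$ is exactly right, and discarding redundant equalities is harmless for calmness since the reduced perturbation has no larger norm) and delegate the error bound to the standard chain Robinson CQ $\Rightarrow$ metric regularity $\Rightarrow$ calmness at a local minimizer with Lipschitz objective. The two proofs hinge on the same Slater hypothesis---your CQ verification is essentially the hypothesis of Robinson's stability theorem, whose proof internally performs the paper's convex-combination construction. What your route buys: it is modular, it establishes the CQ at all feasible points (not just local optima), and metric regularity automatically covers perturbations of the affine constraints $\langle A_i,X\rangle=b_i$, a point the paper's explicit combination treats only implicitly (its $\tilde X_l$ lies in $\tilde{\mathcal C}$ only up to the affine perturbation $p$). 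What the paper's route buys: it is self-contained and quantitative, producing an explicit constant in terms of $\delta$ rather than citing an abstract stability theorem. Your closing remark correctly identifies the direct alternative as the style actually used in the paper.
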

\begin{proof}
Since the Slater condition holds for  the feasible regions of both $X$ and $U$,
for each pair $(X_l, U_l)$ in the perturbed problem
we can find $(\tilde{X}_l, \tilde{U}_l)$ in $\tilde{C}\cap S^n_+$
with the distance between $(X_l, U_l)$ and $(\tilde{X}_l, \tilde{U}_l)$  bounded by
some constant times the magnitude of perturbation.

In particular, let $(\hat{X},\hat{U})$ be a strictly feasible solution to (\ref{PenaltyCompleRankPSD})
with minimum eigenvalue~$\delta>0$.
Let the minimum eigenvalue of $(X_l, U_l)$ be $-\epsilon<0$.
We construct
\begin{displaymath}
(\tilde{X}_l, \tilde{U}_l) \, = \, (X_l, U_l) \, + \,
\frac{\epsilon}{\delta+\epsilon} \left( (\hat{X},\hat{U}) \, - \, (X_l, U_l) \right)
\, \in \, %
S^n_+.
\end{displaymath}
Note that for $\epsilon << \delta$, we have
\begin{displaymath}
f_{\rho} (\tilde{X}_l, \tilde{U}_l) \, - \, f_{\rho}(X_l, U_l) \, = \, O(\epsilon),
\end{displaymath}
exploiting the Lipschitz continuity of~$\Psi(X)$.
As $(X_l, U_l)$ converges to $(\bar{X},\bar{U})$,
we also have $(\tilde{X}_l, \tilde{U}_l)\rightarrow (\bar{X},\bar{U})$,
so by the local optimality of $(\bar{X},\bar{U})$
we can give a bound on the optimal value of the perturbed problem
and the statement holds.
\end{proof}
It follows that the KKT conditions will hold at local minimizers for
the penalty formulation.
\begin{proposition}
The first order KKT condition holds at local optimal solutions for the penalty formulation
(\ref{PenaltyCompleRankPSD}) if the Slater condition holds for the feasible region
$\tilde{C}\cap S^n_+$ of~$X$.
\end{proposition}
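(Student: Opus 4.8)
The plan is to invoke the calmness result of the preceding proposition together with the general principle, recorded in Theorem~\ref{thm:calmkkt}, that Clarke calmness at a local minimizer forces the first-order KKT conditions. First I would note that the penalty problem~(\ref{PenaltyCompleRankPSD}) is itself an instance of the conic optimization problem appearing in the definition of Clarke calmness (equivalently, an SDCMPCC of the form~(\ref{GSDCMPCC}) with $H$ identically the zero matrix, so that $c\equiv 0$): the variable is the pair $(X,U)\in\mathbb{S}^n\times\mathbb{S}^n$, the equality constraints are the affine equations $\langle A_i,X\rangle=b_i$ defining $\tilde{\mathcal C}$, and the cone constraints are $X\succeq0$, $U\succeq0$, and $I-U\succeq0$, all with smooth data. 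Hence Theorem~\ref{thm:calmkkt} applies directly.

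Given the Slater hypothesis on $\tilde{\mathcal C}\cap\mathbb{S}^n_+$, the preceding proposition shows that calmness holds at every local optimum $(\bar X,\bar U)$ of~(\ref{PenaltyCompleRankPSD}). Applying Theorem~\ref{thm:calmkkt}, there then exist multipliers: $\lambda\in\real^{m_2}$ for the linear equalities, $Y\in\mathbb{S}^n_+$ for the constraint $I-U\succeq0$, and positive semidefinite multipliers for $X\succeq0$ and for $U\succeq0$, such that the Lagrangian is stationary at $(\bar X,\bar U)$ and the corresponding complementary slackness relations hold, in particular $\langle Y,\,I-\bar U\rangle=0$.

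It remains to translate this abstract statement into the concrete system~(\ref{PenaltyKKTCondition}). Since $f_{\rho}(X,U)=n-\langle I,U\rangle+\rho\langle X,U\rangle$, the derivative in the $X$-block is $\rho U$ and in the $U$-block is $-I+\rho X$; the affine constraints contribute $\sum_i\lambda_i A_i$ in the $X$-block, and the constraint $I-U\succeq0$ contributes $-Y$ in the $U$-block. Identifying the multiplier of $X\succeq0$ with $\sum_i\lambda_i A_i+\rho\bar U$ and the multiplier of $U\succeq0$ with $-I+\rho\bar X+Y$, and rewriting the complementary slackness between a PSD variable and its PSD multiplier in the standard form $0\preceq\,\cdot\,\perp\,\cdot\,\succeq0$, yields exactly~(\ref{PenaltyKKTCondition}). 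All of this is mechanical: the objective $f_{\rho}$ is smooth and every constraint function is affine in $(X,U)$, so all the subdifferentials in Theorem~\ref{thm:calmkkt} are singletons.

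The only genuine care required — and the main obstacle, such as it is — is the semidefinite cone bookkeeping: one must remember that the multiplier associated with a constraint of the form $M\succeq0$ itself lies in $\mathbb{S}^n_+$ and is complementary, in the Frobenius inner product, to the slack $M$, and one must keep the signs consistent so that $Y\succeq0$, $-I+\rho\bar X+Y\succeq0$, and $\sum_i\lambda_i A_i+\rho\bar U\succeq0$ emerge with the correct orientation. Once the cones and signs are handled the proposition follows at once.
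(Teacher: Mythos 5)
Your proposal is correct and follows exactly the route the paper intends: the paper states this proposition without proof as an immediate consequence of the preceding calmness result and Theorem~\ref{thm:calmkkt}, which is precisely your argument. Your additional bookkeeping translating the abstract multipliers into the explicit system~(\ref{PenaltyKKTCondition}) is accurate and simply fills in what the paper leaves implicit.
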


\subsection{Local Optimality Condition of Penalty Formulation}

\subsubsection{Property of KKT Stationary Points of Penalty Formulation}

The KKT condition in  the penalty formulation works in a similar way with some thresholding
methods~\cite{candes1,cambierabsil2016,vandereycken2013,weiCCL2016b}.
The objective function not only counts the number of zero eigenvalues,
but also the number of eigenvalues below a certain threshold,
as illustrated in the following proposition.
\begin{proposition}
Let $(\bar{X},\bar{U})$ be a local optimal solution to the penalty formulation. Let $\sigma_i$ be an eigenvalue of $\bar{U}$ and $v_i$ be a corresponding eigenvector of norm one.
It follows that:
\begin{itemize}
\item If $\sigma_i \,=\,1$, then $v_i^T (\Psi + \rho \bar{X}) v_i\,\leq\,1$.
\item If $\sigma_i \,=\,0$, then $v_i^T (\Psi + \rho \bar{X}) v_i\,\geq\,1$.
\item if $0\,<\,\sigma_i\,<\,1$, then $v_i^T (\Psi + \rho \bar{X}) v_i\,=\,1$ 
\end{itemize}
\end{proposition}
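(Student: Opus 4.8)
The plan is to read all three conclusions off the first‑order KKT system (\ref{PenaltyKKTCondition}), which is in force at the local optimum $(\bar X,\bar U)$ under the Slater hypothesis of the preceding proposition; accordingly I treat $(\bar X,\bar U)$ as a KKT point, with $\bar Y\in\mathbb{S}^n_+$ the multiplier attached to the constraint $I-U\succeq 0$ (the multiplier $\lambda$ for the affine constraints plays no role here). Set $M:=-I+\rho\bar X+\bar Y$. Then the first block of (\ref{PenaltyKKTCondition}) says $M\succeq 0$ and $\langle\bar U,M\rangle=0$, while the third block says $\bar Y\succeq 0$, $I-\bar U\succeq 0$, and $\langle\bar Y,I-\bar U\rangle=0$.

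First I would invoke the elementary fact that two positive semidefinite matrices with vanishing Frobenius inner product annihilate each other: from $\bar U,M\succeq 0$ and $\langle\bar U,M\rangle=0$ we get $\bar U M=M\bar U=0$, and likewise $\bar Y(I-\bar U)=(I-\bar U)\bar Y=0$. Hence $\Range(\bar U)\subseteq\ker M$, and $\Range(\bar Y)\subseteq\ker(I-\bar U)$, the latter being precisely the eigenspace $E_1$ of $\bar U$ for the eigenvalue $1$. Now let $v_i$ be a unit eigenvector of $\bar U$ with eigenvalue $\sigma_i$. If $\sigma_i<1$ then $v_i$ is orthogonal to $E_1\supseteq\Range(\bar Y)$, so $\bar Y v_i=0$; and if $\sigma_i>0$ then $v_i\in\Range(\bar U)\subseteq\ker M$, so $Mv_i=0$.

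The three cases then follow by evaluating quadratic forms at $v_i$. When $0<\sigma_i<1$, both facts apply: $Mv_i=0$ together with $\bar Y v_i=0$ gives $\rho\bar X v_i=v_i$, so $v_i^T\bar X v_i=\frac{1}{\rho}$. When $\sigma_i=1$, we have $Mv_i=0$, hence $0=v_i^TMv_i=-1+\rho\,v_i^T\bar X v_i+v_i^T\bar Y v_i$, and since $v_i^T\bar Y v_i\ge 0$ this forces $\rho\,v_i^T\bar X v_i\le 1$. When $\sigma_i=0$, we have $\bar Y v_i=0$ and $M\succeq 0$, so $0\le v_i^TMv_i=-1+\rho\,v_i^T\bar X v_i$, i.e.\ $v_i^T\bar X v_i\ge\frac{1}{\rho}$.

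The one step requiring care is controlling the spurious term $v_i^T\bar Y v_i$: for $\sigma_i\le 1$ one cannot get away with only $\bar Y\succeq 0$ but must use the third complementarity block $\langle\bar Y,I-\bar U\rangle=0$ to place $\Range(\bar Y)$ inside $E_1$, which kills $\bar Y v_i$ exactly when $\sigma_i<1$. Everything else — the product‑of‑PSD‑matrices lemma and the three one‑line computations — is routine, though it is worth a sentence to recall that the KKT system (\ref{PenaltyKKTCondition}) is available, which is the content of the preceding proposition under the Slater condition.
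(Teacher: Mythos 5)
Your proof is correct and follows essentially the same route as the paper: both arguments read the three cases off the KKT system (\ref{PenaltyKKTCondition}), using the complementarity of $\bar U$ with $-I+\rho\bar X+\bar Y$ and of $\bar Y$ with $I-\bar U$ to kill the relevant terms in the quadratic form $v_i^T(-I+\rho\bar X+\bar Y)v_i$. The only difference is presentational—you make explicit the lemma that complementary PSD matrices annihilate each other, which the paper uses implicitly.
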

\begin{proof}
If $\sigma_i \,=\,1$, since $v_i$ is an eigenvector of U, by the complementarity in the KKT condition it follows that:
\begin{displaymath}
v_i^T(-I\,+\,\Psi \, + \, \rho X\,+\,Y)v_i\,=\,0.
\end{displaymath}
As $v_i^T Y v_i \,\geq\,0$, we have $v_i^T (\Psi + \rho \bar{X}) v_i\,\leq\,1$.

If $\sigma_i\,=\,0$, then $v_i$ is an eigenvector of $I-U$ with eigenvalue~1.
By the complementarity of $I -U$ and $Y$ we have $v_i^T Y v_i = 0$ and
\begin{displaymath}
v_i^T(-I\,+\,\Psi \, + \, \rho \bar{X}\,+\,Y)v_i\,=\,-1\,+\, v_i^T (\Psi + \rho \bar{X}) v_i\,\geq\,0.
\end{displaymath}
The above inequality is satisfied if and only if $v_i^T (\Psi + \rho \bar{X}) v_i \geq 1$.

If $0\,<\,\sigma_i\,<\,1$,then $v_i$ is an eigenvector of $I-U$ corresponding to an eigenvalue in $(0,1)$. The complementarity in KKT condition implies   that $v_i^T Y v_i \,=\, 0\,=\,
v_i^T(-I\,+\,\Psi \, + \, \rho \bar{X}\,+\,Y)v_i$. It follows that $v_i^T (\Psi + \rho \bar{X}) v_i\,=\,1$. 
\end{proof}

Using the proposition above, we can show the equivalence between the stationary points of the SDCMPCC formulation and the penalty formulation.
\begin{proposition}   \label{prop:exactpenalty}
If $(\bar{X},\bar{U})$ is a stationary point  of the SDCMPCC formulation
with corresponding subgradient~$\Psi$
then it is a stationary point of the penalty formulation
if the penalty parameter $\rho$ is sufficiently large.
\end{proposition}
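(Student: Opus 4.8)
The plan is to read the stationarity conditions of the SDCMPCC formulation~(\ref{CompleRankPSD}) directly off Theorem~\ref{thm:calmkkt}, observe that they are almost identical to the penalty KKT conditions~(\ref{PenaltyKKTCondition}), and then exploit the rigidity those conditions force on a feasible point.

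First I would record the SDCMPCC stationarity system. Every function in~(\ref{CompleRankPSD}) is linear or bilinear, so the subdifferentials in Theorem~\ref{thm:calmkkt} are gradients, and a stationary point $(\bar X,\bar U)$ admits multipliers $\lambda\in\real^{m_2}$, $Y\in\mathbb{S}^n_+$ and a scalar $\lambda^c$ (the multiplier of the complementarity $c(X,U)=\langle X,U\rangle$) with $0\preceq\bar U\perp -I+\lambda^c\bar X+Y\succeq0$, $0\preceq\bar X\perp\sum_i\lambda_iA_i+\lambda^c\bar U\succeq0$, $0\preceq Y\perp I-\bar U\succeq0$, together with primal feasibility, in particular $\langle\bar X,\bar U\rangle=0$ and $0\preceq\bar U\preceq I$. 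This is exactly~(\ref{PenaltyKKTCondition}) with $\rho$ replaced by $\lambda^c$, and $(\bar X,\bar U)$ is automatically feasible for the relaxed penalty problem; so it suffices to show that, for all sufficiently large $\rho$, the same point together with the recycled multipliers $\lambda'=\lambda$ and $Y'=\bar U$ satisfies~(\ref{PenaltyKKTCondition}).

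The substantive step is to pin down the structure of $(\bar X,\bar U,Y,\lambda^c)$. From $\bar X,\bar U\succeq0$ and $\langle\bar X,\bar U\rangle=0$ one gets $\bar X\bar U=0$, hence $\Range(\bar U)\subseteq\ker(\bar X)$. Since $\langle A,B\rangle=0$ for PSD $A,B$ forces $AB=0$, the complementarity $\langle -I+\lambda^c\bar X+Y,\bar U\rangle=0$ gives $(-I+\lambda^c\bar X+Y)\bar U=0$, so (using $\bar X\bar U=0$) $Y\bar U=\bar U$, i.e.\ $Y$ acts as the identity on $\Range(\bar U)$; combining this with $\langle Y,I-\bar U\rangle=0$ excludes eigenvalues of $\bar U$ in $(0,1)$, so $\bar U$ is an orthogonal projector and $Y=\bar U$. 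Evaluating $-I+\lambda^c\bar X+\bar U\succeq0$ on a unit vector of $\ker(\bar X)\cap\ker(\bar U)$ would give $-1\ge0$, forcing that intersection to be trivial, so $\bar U$ is the orthogonal projector onto $\ker(\bar X)$; evaluating the same matrix on an eigenvector of $\bar X$ for its smallest positive eigenvalue $\tilde\lambda$ gives $\lambda^c\tilde\lambda\ge1$. Finally $\langle\sum_i\lambda_iA_i+\lambda^c\bar U,\bar X\rangle=0$ reduces, via $\langle\bar U,\bar X\rangle=0$ and $\langle A_i,\bar X\rangle=b_i$, to $\sum_i\lambda_ib_i=0$. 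I expect this rigidity argument to be the main obstacle; once it is in place, enlarging the complementarity weight from $\lambda^c$ to $\rho$ only makes the two dual slacks "more" positive semidefinite.

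To close the proof I would take any $\rho\ge\lambda^c$ and write $P:=I-\bar U$ for the orthogonal projector onto $\Range(\bar X)$. The first line of~(\ref{PenaltyKKTCondition}) holds because $-I+\rho\bar X+\bar U=\rho\bar X-P\succeq0$ (since $\bar X\succeq\tilde\lambda P$ and $\rho\tilde\lambda\ge\lambda^c\tilde\lambda\ge1$) and $\langle\rho\bar X-P,\bar U\rangle=0$ (as $\bar X$, $P$, $\bar U$ are supported on mutually orthogonal subspaces); the third line holds because $\bar U$ is a projector; and the second line holds because $\sum_i\lambda_iA_i+\rho\bar U=\bigl(\sum_i\lambda_iA_i+\lambda^c\bar U\bigr)+(\rho-\lambda^c)\bar U\succeq0$, with $\langle\sum_i\lambda_iA_i+\rho\bar U,\bar X\rangle=\sum_i\lambda_ib_i+\rho\langle\bar U,\bar X\rangle=0$. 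The degenerate case $\bar X=0$, where $\tilde\lambda$ is undefined, is even easier: then $\bar U=I$ and every $\rho>0$ works. Hence $(\bar X,\bar U)$ is a stationary point of the penalty formulation whenever $\rho\ge\lambda^c$, which is the claim.
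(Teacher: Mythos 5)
Your proof is correct and follows essentially the same route as the paper: verify the penalty KKT system~(\ref{PenaltyKKTCondition}) at $(\bar X,\bar U)$ directly with an explicit choice of multipliers once $\rho$ exceeds (roughly) the reciprocal of the smallest positive eigenvalue of $\bar X$. You supply details the paper's terse argument leaves implicit --- namely that SDCMPCC stationarity forces $\bar U$ to be the orthogonal projector onto $\ker(\bar X)$, $Y=\bar U$, and $\lambda^c\tilde\lambda\ge 1$ --- and you recycle the affine multipliers $\lambda'=\lambda$, whereas the paper simply sets $\lambda=0$, which makes the second line of~(\ref{PenaltyKKTCondition}) hold trivially.
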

\begin{proof}
Choose $\rho$ so that the 
minimum positive eigenvalue of $\Psi + \rho \bar{X}$
is strictly greater than~$1$.
By setting $\lambda = 0$ and with a proper assignment of $Y$ we can see that first order optimality
condition (\ref{PenaltyKKTCondition}) holds at $(\bar{X},\bar{U})$ for such a choice of~$\rho$,
thus $(\bar{X},\bar{U})$ is a KKT stationary point for the penalty formulation.
\end{proof}

\subsubsection{Local Convergence of KKT Stationary Points}

We would like to investigate whether local convergence results can be established for
the penalty formulation, that is, whether the limit points of KKT stationary points of the
penalty scheme are KKT stationary points of the SDCMPCC formulation.
Unfortunately, local convergence does not hold for the penalty formulation,
although the limit points are feasible in the original SDCMPCC formulation.
\begin{proposition}
Let $(X_k, U_k)$ be a KKT stationary point of the penalty scheme with 
subgradient $\Psi_k$ and
penalty parameter $\{\rho_k\}$. As $\rho_k \rightarrow \infty$, any limit point $(\bar{X},\bar{U})$ of the sequence $\{(X_k, U_k)\}$ is a feasible solution to the original problem.
\end{proposition}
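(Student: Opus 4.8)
The plan is to argue by contradiction, exploiting the penalty term's role in forcing feasibility of the complementarity constraint in the limit, much as in the global convergence proposition earlier in the excerpt. First I would observe that since each $(X_k, U_k)$ is a KKT stationary point of $SDCPNLP(\rho_k)$, it is in particular feasible for that problem, so $X_k \in \tilde{\mathcal{C}}$, $X_k \succeq 0$, $0 \preceq U_k \preceq I$. By the closedness of $\tilde{\mathcal{C}}$ (an affine manifold under our standing assumption) and of $\{U : 0 \preceq U \preceq I\}$, any limit point $(\bar{X}, \bar{U})$ inherits $\bar{X} \in \tilde{\mathcal{C}}$, $\bar{X} \succeq 0$, $0 \preceq \bar{U} \preceq I$. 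The only feasibility condition of~(\ref{CompleRankPSD}) still to verify is the complementarity $\langle \bar{X}, \bar{U} \rangle = 0$.

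The key step is to bound $\langle X_k, U_k \rangle$ from above by something that vanishes as $\rho_k \to \infty$. Here I would use stationarity rather than mere feasibility: from the first-order KKT condition~(\ref{PenaltyKKTCondition}), the complementarity $X_k \perp (\sum_i \lambda_i^{(k)} A_i + \rho_k U_k)$ together with $U_k \perp (-I + \rho_k X_k + Y_k)$ yields identities relating $\langle I, U_k\rangle$, $\rho_k \langle X_k, U_k \rangle$, and the trace of $Y_k$. Specifically, taking the inner product of the second complementarity relation with the identity structure gives $\langle U_k, -I + \rho_k X_k + Y_k \rangle = 0$, i.e. $\rho_k \langle X_k, U_k \rangle = \langle I, U_k \rangle - \langle U_k, Y_k \rangle \le \langle I, U_k \rangle \le n$, using $U_k \succeq 0$, $Y_k \succeq 0$. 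Hence $\langle X_k, U_k \rangle \le n/\rho_k \to 0$. Since $\langle X_k, U_k \rangle \to \langle \bar{X}, \bar{U} \rangle$ by continuity of the Frobenius inner product along the convergent subsequence, we get $\langle \bar{X}, \bar{U} \rangle = 0$. Combined with $\bar{X}, \bar{U} \succeq 0$ this is exactly $0 \preceq \bar{X} \perp \bar{U} \succeq 0$, so $(\bar{X}, \bar{U})$ satisfies every constraint of~(\ref{CompleRankPSD}).

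The main obstacle I anticipate is making the bound on $\langle X_k, U_k \rangle$ fully rigorous from the stationarity conditions, since one must be careful that the relation $\langle U_k, -I + \rho_k X_k + Y_k \rangle = 0$ really follows from the matrix complementarity $U_k \perp (-I + \rho_k X_k + Y_k)$ (it does, because for $A, B \succeq 0$, $\langle A, B \rangle = 0$ is precisely the complementarity, and that inner product is the quantity in question). An alternative, which sidesteps the multipliers entirely, is to mimic the global-convergence argument: a KKT stationary point need not be globally optimal, so that route does not directly apply, but one can still note that $(X_k, 0)$ is feasible for $SDCPNLP(\rho_k)$ with objective~$n$, and if $(X_k, U_k)$ were stationary with $\langle X_k, U_k\rangle$ bounded away from zero along a subsequence, the penalty term $\rho_k \langle X_k, U_k\rangle$ blows up while the stationarity identity above caps it at $n$ — a contradiction. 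I would present the stationarity-based computation as the clean proof and remark that no claim about the objective values or optimality of the limit is made, only feasibility.
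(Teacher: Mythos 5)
Your proof is correct and uses essentially the same argument as the paper: both exploit the KKT complementarity $\langle U_k, -I + \rho_k X_k + Y_k\rangle = 0$ together with $\langle U_k, Y_k\rangle \ge 0$ and $\langle I, U_k\rangle \le n$ to control $\rho_k\langle X_k, U_k\rangle$, the paper phrasing it as a contradiction while you state it as the direct bound $\langle X_k, U_k\rangle \le n/\rho_k \to 0$. Your explicit verification of the remaining constraints via closedness is a minor (and welcome) completion of what the paper leaves implicit.
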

\begin{proof}
The proposition can be verified by contradiction.
Note that the norm of $\Psi_k$ is bounded since $\psi(X)$ is Lipschitz continuous.
If the Frobenius inner product of
$\bar{X}$ and $\bar{U}$ is greater than 0,
then when k is large enough we have:
\begin{displaymath}
\begin{array}{ll}
\langle U_k, -I_k\,+\,\Psi_k \, + \, \rho_k X_k\,+\,Y_k\rangle &\,\geq\,-\langle U_k, I\rangle \, +
\, \langle U_k, \Psi_k \rangle \, +\,\rho_k \langle X_k, U_k\rangle \, > \, 0
\end{array}
\end{displaymath}
which violates the complementarity in the KKT conditions of the penalty formulation.
\end{proof}

We can show that a limit point may not be a KKT stationary point. Consider the following problem:
\begin{equation}
\begin{array}{ll}
\displaystyle{
{\operatornamewithlimits{\mbox{minimize}}_{X,U \, \in \, \mathbb{S}^3_+}}
} &n\,-\,\langle I,U\rangle \\[0.15in]
\mbox{subject to} ~&X_{11}\,=\,1,~0\,\preceq\,U\,\preceq\,I~\mbox{and}~\langle U,X\rangle \,=\,0
\end{array}
\end{equation}
The penalty formulation takes the form:
\begin{equation}
\begin{array}{ll}
\displaystyle{
{\operatornamewithlimits{\mbox{minimize}}_{X,U \, \in \, \mathbb{S}^2_+}}
} &n\,-\,\langle I,U\rangle \,+\,\rho_k\langle X,U\rangle \\[0.15in]
\mbox{subject to}  ~&X_{11}\,=\,1,~0\,\preceq\,U\,\preceq\,I
\end{array}
\end{equation}
Let $X_k$ and $U_k$ take the value:
\begin{equation*}
X_k\,=\,\left[\begin{array}{clcr}
1&0\\
0&\frac{2}{\rho_k}\end{array}\right],\,
U_k\,=\,\left[\begin{array}{clcr}
0&0\\
0&0\end{array}\right],\,
\end{equation*}
so $(X_k, U_k)$ is a KKT stationary point for the penalty formulation. However, the limit point:
\begin{equation*}
\bar{X}\,=\,\left[\begin{array}{clcr}
1&0\\
0&0\end{array}\right],\,
U_k\,=\,\left[\begin{array}{clcr}
0&0\\
0&0\end{array}\right],\,
\end{equation*}
is not a KKT stationary point for the original SDCMPCC formulation.

\section{Proximal Alternating Linearized Minization  \label{sec:PALM}}

The proximal alternating linearized minimization (PALM) algorithm of
Bolte et al.~\cite{bolte2014proximal} is used to solve a wide class
of nonconvex and nonsmooth problems of the form
\begin{equation}    \label{eq:bolte_prob}
\mbox{minimize}_{x \in \real^n, y \in \real^m}~ \Phi(x,y)\,:=\,f(x)\,+\,g(y)\,+\,H(x,y)
\end{equation}
where $f(x)$, $g(y)$ and $H(x,y)$ satisfy smoothness and continuity assumptions:
\begin{itemize}
\item $f:\mathbb{R}^n \rightarrow (-\infty, +\infty]$ and $g:\mathbb{R}^m \rightarrow (-\infty, +\infty]$ are proper and lower semicontinuous functions.
\item H: $\mathbb{R}^n \times \mathbb{R}^m\rightarrow \mathbb{R}$ is a $C^1$ function.
\end{itemize}
No convexity assumptions are made.
Iterates are updated using a proximal map
with respect to a function $\sigma$ and weight parameter $t$:
\begin{equation}
\mbox{prox}_t^{\sigma} (\hat{x}) \,
= \,\mbox{argmin}_{x \in\mathbb{R}^d}\left\{\sigma(x) \, + \, \frac{t}{2}||x\,-\,\hat{x}||^2 \right\} .
\end{equation}
When $\sigma$ is a convex function, the objective is strongly convex and the map
returns a unique solution.
The PALM algorithm is given in Procedure~\ref{proc:palm}.
\begin{algorithm}
\SetAlgorithmName{Procedure}{}

\SetKwInOut{Input}{input}\SetKwInOut{Output}{output}

\Input{Initialize with any $(x^0,y^0)\in \mathbb{R}^n \times\mathbb{R}^m$.
Given Lipschitz constants $L_1(y)$ and $L_2(x)$ for the partial gradients of $H(x,y)$
with respect to $x$ and~$y$.}
\Output{Solution to (\ref{eq:bolte_prob}).}
\BlankLine
For  $k = 1,2,\cdots$ generate a sequence $\{x^k ,y^k\}$ as follows:
\begin{itemize}
\item Take $\gamma_1 > 1$, set $c_k = \gamma_1 L_1(y^k)$ and compute:
$x^{k+1} \in \mbox{prox}_{c_k}^f(x^k\,-\,\frac{1}{c_k}\nabla_x H(x^k,y^k))$.
\item Take $\gamma_2 > 1$, set $d_k = \gamma_2 L_2(x^k)$ and compute:
$y^{k+1} \in \mbox{prox}_{d_k}^g(y^k\,-\,\frac{1}{d_k}\nabla_y H(x^k,y^k))$.
\end{itemize}
\caption{PALM algorithm}\label{proc:palm}
\end{algorithm} 
It was shown in \cite{bolte2014proximal} that it converges
to a stationary point of $\Phi(x,y)$ under the following assumptions on the functions:
\begin{itemize}
\item $\inf_{\mathbb{R}^m\times \mathbb{R}^n} \Phi>-\infty$, $\inf_{\mathbb{R}^n} f> -\infty$ and $\inf_{\mathbb{R}^m}g > -\infty$.

\item The partial gradient $\nabla_x H(x, y)$ is globally Lipschitz with moduli~$L_1(y)$, so:
\begin{equation*}
||\nabla_x H(x_1, y) - \nabla_x H(x_2, y)||\,\leq\,L_1(y) ||x_1 - x_2 ||, \,
\forall x_1, x_2 \in \mathbb{R}^n.
\end{equation*}
Similarly, the partial gradient $\nabla_y H(x, y)$ is globally Lipschitz with moduli~$L_2(x)$.

\item For $i = 1, 2$, there exists $\lambda_i^-,\lambda_i^+$ such that:
\begin{itemize}
\item $\inf\{L_1(y^k): k\in\mathbb{N} \}\geq \lambda_1^-$ and $\inf\{L_2(x^k): k\in\mathbb{N}\} \geq \lambda_2^-$
\item $\sup\{L_1(y^k): k\in\mathbb{N} \}\leq \lambda_1^+$ and $\sup\{L_2(x^k): k\in\mathbb{N}\} \leq \lambda_2^+$.
\end{itemize}

\item $\nabla H$ is continuous on bounded subsets of $\mathbb{R}^n \times \mathbb{R}^m$,
i.e, for each bounded subset $B_1 \times B_2$ of $\mathbb{R}^n \times \mathbb{R}^m$
there exists $M>0$ such that for all $(x_i, y_i) \in B_1 \times B_2$, $i = 1,2$:
\begin{equation}
\begin{array}{ll}
&||(\nabla_x H(x_1,y_1) - \nabla_x H(x_2,y_2),\nabla_y H(x_1,y_1) - \nabla_y H(x_2,y_2), )||\\ [5pt]
& \qquad \qquad \leq \, M||(x_1-x_2, y_1 - y_2)||.
\end{array}
\end{equation}
\end{itemize}

The PALM method can be applied to the penalty formulation of SDCMPCC formulation of
rank minimization (\ref{PenaltyCompleRankPSD})
with the following assignment of the functions:
\begin{equation*}
\begin{array}{ll}
&f(X)\,=\, \frac{1}{2} \rho_X ||X||_F^2\,+\,\psi(X) \, + \, \mathcal{I}(X\in\mathcal{C})\\
&g(U)\,=\,n\,-\,\langle I,\,U\rangle \,+\,\mathcal{I}(U\in\{0 \preceq U\preceq I\})\\
&H(X,\,U)\,=\, \rho \langle X,\,U\rangle ,
\end{array}
\end{equation*}
where
$\mathcal{I}(.)$ is an indicator function taking the value 0 or~$+\infty$, as appropriate.
Note that we have added a regularization term $||X||_F^2$ to the objective function.
When the feasible region for $X$ is bounded, the assumptions required for the convergence of the PALM
procedure hold for the penalty formulation of SDCMPCC. 
\begin{proposition}
The function $\Phi(X,U) = f(X)\,+\,g(U)\,+\,H(X,U)$ is bounded below for $X \in \mathbb{S}^n$ and $U \in \mathbb{S}^n$
if $\psi(X)$ is bounded below.
\end{proposition}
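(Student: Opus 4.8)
\emph{Plan.} The idea is to read off a lower bound for $\Phi$ directly from its definition, using that the indicator functions force $X\in\mathcal{C}=\tilde{\mathcal{C}}\cap\mathbb{S}^n_+$ and $0\preceq U\preceq I$ on the region where $\Phi$ is finite, and that on that region each of the three summands in $\Phi(X,U)=\frac{1}{2}\rho_X\|X\|_F^2+(n-\langle I,U\rangle)+\rho\langle X,U\rangle$ is nonnegative. First I would dispose of the trivial case: if $X\notin\mathcal{C}$ or $U$ fails $0\preceq U\preceq I$, then one of the indicator terms in $f$ or $g$ equals $+\infty$, so $\Phi(X,U)=+\infty$, which is bounded below.

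Then, assuming $X\in\tilde{\mathcal{C}}\cap\mathbb{S}^n_+$ and $0\preceq U\preceq I$, I would bound the three pieces one at a time. The regularization term $\frac{1}{2}\rho_X\|X\|_F^2$ is nonnegative since $\rho_X\geq 0$. For the second term, $I-U\succeq 0$ together with $I\succeq 0$ gives $\langle I,\,I-U\rangle\geq 0$, i.e. $n-\langle I,U\rangle\geq 0$ (indeed this term lies in $[0,n]$, using also $U\succeq 0$). For the cross term, since $X\succeq 0$ and $U\succeq 0$ one has $\langle X,U\rangle=\operatorname{trace}(X^{1/2}UX^{1/2})\geq 0$, hence $\rho\langle X,U\rangle\geq 0$ because $\rho>0$. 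Adding the three estimates yields $\Phi(X,U)\geq 0$, so $\inf_{\mathbb{S}^n\times\mathbb{S}^n}\Phi\geq 0>-\infty$; the same estimates incidentally give $\inf_{\mathbb{S}^n}f\geq 0$ and $\inf_{\mathbb{S}^n}g=0$, which are exactly the remaining lower-boundedness hypotheses needed to invoke the PALM convergence theorem of Bolte et al.

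There is no substantive obstacle: the only facts invoked are that the Frobenius inner product of two positive semidefinite matrices is nonnegative and that $\rho,\rho_X\geq 0$. The single point that must be handled with care is that the positive semidefiniteness constraint on $X$ is part of $\mathcal{C}$, so that the cross term $\rho\langle X,U\rangle$ cannot become negative; were $\mathcal{C}$ only the affine manifold $\tilde{\mathcal{C}}$, the bare summand argument would fail and one would instead have to use the boundedness of the feasible region for $X$ (assumed just before the proposition) to control $\langle X,U\rangle$ from below.
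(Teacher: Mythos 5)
Your argument is correct and follows essentially the same route as the paper: on the set where the indicator terms are finite, each of the three summands of $\Phi$ is nonnegative (the regularization term, $n-\langle I,U\rangle$ since $0\preceq U\preceq I$, and $\rho\langle X,U\rangle$ since $X,U\succeq 0$), so $\Phi\geq 0$. You merely spell out the nonnegativity of the Frobenius inner product of positive semidefinite matrices and the role of the indicators more explicitly than the paper's one-line proof does.
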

\begin{proof}
Since the eigenvalues of $U$ are bounded by 1, and the Frobenius norm of $X$ and $U$
must be nonnegative, the statement is obvious.
\end{proof}

\begin{proposition}
If the feasible region of $X$ is bounded then $H(X,U)$
is globally Lipschitiz. 
\end{proposition}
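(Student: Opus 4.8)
The plan is to reduce the claim to the bilinearity of the Frobenius inner product together with the a priori bounds on feasible points. First I would record the two sources of boundedness: by hypothesis the feasible region for $X$ is bounded, so there is a constant $C_X$ with $||X||_F \le C_X$ for all feasible~$X$; and the constraint $0 \preceq U \preceq I$ forces every eigenvalue of~$U$ into $[0,1]$, hence $||U||_F \le \sqrt{n} =: C_U$ automatically, so the feasible region for~$U$ is bounded as well. Set $C := \rho\,\max\{C_X, C_U\}$.

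Next, for feasible pairs $(X_1,U_1)$ and $(X_2,U_2)$ I would split
\begin{displaymath}
H(X_1,U_1) - H(X_2,U_2) \, = \, \rho\langle X_1 - X_2,\, U_1\rangle \, + \, \rho\langle X_2,\, U_1 - U_2\rangle
\end{displaymath}
and apply the Cauchy--Schwarz inequality for $\langle\cdot,\cdot\rangle$ to each term, using $||U_1||_F \le C_U$ and $||X_2||_F \le C_X$. This yields $|H(X_1,U_1) - H(X_2,U_2)| \le C\,(||X_1-X_2||_F + ||U_1-U_2||_F) \le \sqrt{2}\,C\,||(X_1-X_2,\, U_1-U_2)||_F$, the desired Lipschitz estimate, uniform over the feasible region. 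For the purpose of invoking the PALM convergence theorem I would also note the companion facts, which fall out of the same computation: $\nabla_X H(X,U) = \rho U$ is independent of~$X$ and $\nabla_U H(X,U) = \rho X$ is independent of~$U$, so the partial gradients are Lipschitz in their own variable with moduli $L_1(U) = \rho\,||U||$ and $L_2(X) = \rho\,||X||$, both bounded above on the feasible region; moreover $\nabla H(X,U) = (\rho U,\, \rho X)$ is in fact globally Lipschitz on all of $\mathbb{S}^n\times\mathbb{S}^n$ with constant~$\rho$.

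There is no genuine obstacle here; the argument is essentially a one-line Cauchy--Schwarz estimate. The only point needing a moment's care is that ``global'' must be understood as ``uniform over the feasible region'': since $H$ is bilinear it is genuinely unbounded and non-Lipschitz on all of $\mathbb{S}^n\times\mathbb{S}^n$, so the hypothesis that the feasible region of~$X$ is bounded is exactly what keeps the first factor above finite, while the boundedness of~$U$ that the argument also needs is supplied for free by the semidefinite constraints rather than by the hypothesis.
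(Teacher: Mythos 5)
Your proposal is correct and follows essentially the same route as the paper: the paper simply notes that $\nabla H(X,U)=\rho(U,X)$ and invokes boundedness of the feasible regions of $X$ and $U$, while you carry out the implicit step explicitly via the bilinear splitting and Cauchy--Schwarz. Your added observations (that $0\preceq U\preceq I$ supplies the boundedness of $U$ for free, and that ``global'' must be read as uniform over the feasible region since a bilinear $H$ cannot be Lipschitz on all of $\mathbb{S}^n\times\mathbb{S}^n$) are accurate clarifications of what the paper leaves unstated.
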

\begin{proof}
The gradient of $H(X,U)$ is:
\begin{equation*}
(\nabla_X H(X,U), \nabla_U H(X,U))\,:=\,\rho(U, X)
\end{equation*}
The statement results directly from the boundedness of the feasible region of $X$ and~$U$.
\end{proof}

\begin{proposition}
$\nabla H(X,U) = \rho(U,\,X)$ is continuous on bounded subsets of
$\mathbb{S}^n \times \mathbb{S}^n$.
\end{proposition}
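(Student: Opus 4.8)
The plan is to exploit the fact that the gradient of $H(X,U)=\rho\langle X,U\rangle$ is a \emph{linear} map of $(X,U)$, so the continuity (indeed the quantitative local Lipschitz bound) demanded by the PALM hypothesis holds globally, and hence a fortiori on every bounded subset of $\mathbb{S}^n\times\mathbb{S}^n$. Concretely, the partial gradients are $\nabla_X H(X,U)=\rho U$ and $\nabla_U H(X,U)=\rho X$, so the full gradient is the linear operator $(X,U)\mapsto\rho(U,X)$; linear operators between finite-dimensional spaces are continuous, which already settles the literal statement.

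Next I would record the quantitative version used in Bolte et al.'s convergence theorem. For any two points $(X_1,U_1)$ and $(X_2,U_2)$,
\begin{align*}
\bigl\| \bigl(\nabla_X H(X_1,U_1)-\nabla_X H(X_2,U_2),\ \nabla_U H(X_1,U_1)-\nabla_U H(X_2,U_2)\bigr)\bigr\|
&= \bigl\|\rho(U_1-U_2,\ X_1-X_2)\bigr\| \\
&= \rho\,\bigl\|(X_1-X_2,\ U_1-U_2)\bigr\|,
\end{align*}
where the last equality uses that the Frobenius norm of a block tuple is unchanged under permuting its blocks. Hence $M=\rho$ works, with no dependence on the chosen bounded set $B_1\times B_2$.

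There is no real obstacle here: linearity of $\nabla H$ does all the work, and in fact boundedness is not needed for this particular assumption. Boundedness of the feasible region of $X$ is invoked in the neighbouring propositions only to keep the moduli $L_1(y^k)$ and $L_2(x^k)$ within a fixed interval $[\lambda_i^-,\lambda_i^+]$, so it is worth a one-line remark that the present proposition is the easy piece of the verification that the PALM assumptions apply to $(\ref{PenaltyCompleRankPSD})$.
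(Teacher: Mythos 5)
Your argument is correct: the gradient $(X,U)\mapsto\rho(U,X)$ is linear, hence continuous (indeed globally Lipschitz with constant $\rho$), which is exactly why the paper states this proposition without any proof at all. Your additional quantitative remark matches the intended role of the proposition in verifying the PALM assumptions, so there is nothing to fix.
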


The proximal subproblems in Procedure~\ref{proc:palm} are both convex
quadratic semidefinite programs. The update to $U^{k+1}$ has a closed
form expression based on the eigenvalues and eigenvectors
of $U^k - (\rho/d_k)X^k$.
Rather than solving the update problem for $X^{k+1}$ directly using, for
example, CVX~\cite{dcp06}, we found it more effective to solve the dual
problem using Newton's method, with a conjugate gradient approach to
approximately solve the direction-finding subproblem.
This approach was motivated by work of Qi and Sun~\cite{qi2006quadratically}
on finding nearest correlation matrices.
The structure of the Hessian for our problem is such that the
conjugate gradient approach is superior to a direct factorization approach,
with matrix-vector products relatively easy to calculate compared to
formulating and factorizing the Hessian.
The updates to $X$ and $U$ are discussed in Sections \ref{sec:updateX}
and~\ref{sec:updateU}, respectively.
First, we discuss accelerating the PALM method.

\subsection{Adding Momentum Terms to the PALM Method\label{sec:fastPALM}}

One downside for proximal gradient type methods are their slow convergence rates.
Nesterov~\cite{nesterov2013introductory,nesterov22} proposed accelerating gradient methods for convex
programming by adding a momentum term.
The accelerated algorithm has a quadratic
convergence rate, compared with sublinear convergence rate of the normal gradient method.
Recent
accelerated proximal gradient methods include~\cite{schmidt2011convergence,toh2010accelerated}.

Bolte et al.~\cite{bolte2014proximal} showed that the PALM proximal gradient method
can be applied to nonconvex programs under certain assumptions and the method will converge
to a local optimum. The question arises as to whether there exists an accelerated version in
nonconvex programming. Ghadimi et al.~\cite{ghadimi2016accelerated} presented an
accelerated gradient method for nonconvex nonlinear and stochastic programming,
with quadratic convergence to a limit
point satisfying the first order KKT condition.

There are various ways to set up the momentum term \cite{li2015accelerated,lin2015accelerated}.
Here we adopted the following strategy while updating $x^k$ and~$y^k$:
\begin{equation}
x^{k+1} \in \mbox{prox}_{c_k}^f(x^k\,-\,\frac{1}{c_k}\nabla_x H(x^k,y^k)\,+\,\beta^k(x^k - x^{k-1})).
\end{equation}
\begin{equation}
y^{k+1} \in \mbox{prox}_{d_k}^g(y^k\,-\,\frac{1}{d_k}\nabla_y H(x^k,y^k)\,+\,\beta^k(y^k - y^{k-1})),
\end{equation}
where $\beta^k \,=\,\frac{k-2}{k+1}$ (borrowing from~\cite{nesterov22}).
We refer to this as the Fast PALM method.

\subsection{Updating $X$\label{sec:updateX}}

Assume $\mathcal{C}\,=\,\{X \in \mathbb{S}^n \, : \, \mathcal{A}(X)\,=\,b, \,X\succeq 0\}$ and
the Slater condition holds for~$\mathcal{C}$.
The proximal point $X$ of $\tilde{X}$, or $\mbox{prox}_{c_k}^{f}(\tilde{X})$
can be calculated as the optimal solution to the problem:
\begin{equation} \label{proximalX}
\begin{array}{ll}
\displaystyle{
{\operatornamewithlimits{\mbox{minimize}}_{X \, \in \, \mathbb{S}^{n}}}
} & \rho_X ||X||_F^2\,+\, \psi(X) \, + \, c_k||X-\tilde{X}||_F^2\\ [0.15in]
\mbox{subject to} &\mathcal{A}(X)\,=\,b\\
&X\,\succeq\,0
\end{array}
\end{equation}
The objective can be replaced by:
\begin{equation*}
||X\,-\,\frac{c_k}{c_k+\rho_X}\tilde{X}||_F^2 \, + \, \frac{1}{\rho_X} \psi(X).
\end{equation*}
With the Fast PALM method, we use
\begin{displaymath}
\tilde{X} \, = \, X^k \, - \, \frac{\rho}{c_k} U^k \, + \, \beta^k ( X^k - X^{k-1}).
\end{displaymath}
We observed that the structure of the subproblem to get the proximal point is very similar to the
nearest correlation matrix problem when $\psi(X) \equiv 0$.
Qi and Sun~\cite{qi2006quadratically} showed that for the nearest correlation matrix problem,
a semismooth Newton's method is numerically very efficient compared to other existing methods,
and it achieves a quadratic convergence rate if the Hessian at the optimal solution
is positive definite.
Provided Slater's condition holds for the feasible region of the subproblem and its dual program,
strong duality holds and instead of solving the primal program,
we can solve the dual program which has the following form:
\begin{equation*}
\mbox{min}_{y} \theta(y) :=\,\frac{1}{2}||(G\,+\mathcal{A}^*y)_+||_F^2\,-\,b^T y
\end{equation*}
where $(M)_+$ denotes the projection of the symmetric matrix $M \in \mathbb{S}^n$
onto the cone of positive semidefinite matrices~$\mathbb{S}^n_+$,
and
\begin{displaymath}
G \, = \,  \frac{c_k}{c_k+\rho_X}\tilde{X}  .
\end{displaymath}
One advantage of the dual program over the primal program is that the dual program is unconstrained.
Newton's method can be applied to get the solution $y^*$
which satisfies the first order optimality condition:
\begin{equation}   \label{eq:focXupdate}
F(y) \, := \, \mathcal{A}\,(G\,+\,\mathcal{A}^*y)_+\,=\,b.
\end{equation}
Note that $\nabla \theta(y)=F(y)-b$.
The algorithm is given in Procedure~\ref{proc:focXupdate}.
\begin{algorithm}
\SetAlgorithmName{Procedure}{}

\SetKwInOut{Input}{input}\SetKwInOut{Output}{output}

\Input{Given $y^0, \eta \in (0,1), \rho, \sigma$. Initialize $k=0$.}
\Output{Solution to (\ref{eq:focXupdate}).}
\BlankLine
For each $k = 0, 1,2,\cdots$ generate a sequence $y^{k+1}$ as follows:
\begin{itemize}
\item Select $V_k \in \partial F(y^k)$ and apply the
conjugate gradient method \cite{hestenes1952methods} to find an approximate solution $d^k$ to:
\begin{equation*}
\nabla \theta(y^k)\,+\,V_k d\,=\,0
\end{equation*}
such that:
\begin{equation*}
||\nabla\theta(y^k)\,+\,V_kd^k||\,\leq\,\eta_k ||\nabla\theta(y^k)|| .
\end{equation*}
\item Line Search. Choose the smallest nonnegative integer $m_k$ such that:
\begin{equation*}
\theta(y^k\,+\,\rho^m d^k)\,-\,\theta(y^k)\,\leq\,\sigma \rho^m \nabla\theta(y^k)^T d^k .
\end{equation*}
\item Set $t_k:=\rho^{m_k}$ and $y^{k+1}:= y^k\,+\,t_k d^k$.
\end{itemize}
\caption{Newton's method to solve (\ref{eq:focXupdate})\label{proc:focXupdate}}
\end{algorithm}

In each iteration, one key step is to construct the Hessian matrix $V_k$. Given the eigenvalue decomposition
\begin{equation*}
G\,+\,\mathcal{A}^*y\,=\,P \Lambda P^T ,
\end{equation*}
let $\alpha, \beta, \gamma$ be the sets of indices corresponding to positive, zero
and negative eigenvalues $\lambda_i$ respectively. Set:
\begin{equation*}
M_y\,=\,\left[\begin{array}{clcr}
E_{\alpha \alpha}&E_{\alpha \beta}&(\tau_{ij})_{i\in\alpha,j\in\beta}\\
E_{\beta \alpha}&0&0\\
(\tau_{ji})_{i\in\alpha,j\in\beta}&0&0\end{array}\right]
\end{equation*}
where all the entries in $E_{\alpha\alpha}$, $E_{\alpha\beta}$ and $E_{\beta\alpha}$ take value 1
and
\begin{equation*}
\tau_{ij}\,:=\,\frac{\lambda_i}{\lambda_i\,-\,\lambda_j}, i\,\in\,\alpha,~j\,\in\,\gamma
\end{equation*}
The operator $V_y:\mathbb{R}^m\rightarrow \mathbb{R}^m$ is defined as:
\begin{equation*}
V_y h\,=\,\mathcal{A} \left( P(M_y\,\circ \,(P^T (\mathcal{A}^*h) P))\,P^T \right) ,
\end{equation*}
where $\circ$ denotes the Hadamard product.
Qi and Sun~\cite{qi2006quadratically} showed
\begin{proposition}
The operator $V_y$ is positive semidefinite, with
$\langle h, V_yh\rangle \,\geq\,0$ for any $h\in\mathbb{R}^m$.
\end{proposition}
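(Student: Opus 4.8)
The plan is to reduce the quadratic form $\langle h, V_y h\rangle$ to a manifestly nonnegative expression by passing to the eigenbasis of $W := G + \mathcal{A}^*y$. First I would set $S := \mathcal{A}^*h \in \mathbb{S}^n$ and $\tilde S := P^T S P$, which is again symmetric. Using the adjoint relation $\langle h, \mathcal{A}(M)\rangle = \langle \mathcal{A}^*h, M\rangle$, the cyclic property of the trace, and orthogonality of $P$, the quadratic form collapses:
\[
\langle h, V_y h\rangle \, = \, \langle \mathcal{A}^*h,\, P\bigl(M_y\circ(P^T S P)\bigr)P^T\rangle \, = \, \langle \tilde S,\, M_y\circ\tilde S\rangle .
\]

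Next I would expand the right-hand side entrywise. Since the Frobenius inner product is $\langle A,B\rangle=\sum_{i,j}A_{ij}B_{ij}$ and the Hadamard product acts componentwise,
\[
\langle \tilde S,\, M_y\circ\tilde S\rangle \, = \, \sum_{i,j} (M_y)_{ij}\,\tilde S_{ij}^2 .
\]
It then suffices to check that every entry of $M_y$ is nonnegative. The $E$-blocks contribute $1$ and the explicit zero blocks contribute $0$; the only nontrivial entries are $\tau_{ij}=\lambda_i/(\lambda_i-\lambda_j)$ with $i\in\alpha$ (so $\lambda_i>0$) and $j\in\gamma$ (so $\lambda_j<0$), whence $\lambda_i-\lambda_j>0$ and $\tau_{ij}\in(0,1)$; by symmetry of $M_y$ the transposed block is likewise nonnegative. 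Hence each summand $(M_y)_{ij}\tilde S_{ij}^2\geq 0$, and therefore $\langle h, V_y h\rangle\geq 0$.

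This argument mirrors that of Qi and Sun~\cite{qi2006quadratically}: the inner map $S\mapsto P(M_y\circ(P^T S P))P^T$ is a generalized Jacobian element of the metric projection onto $\mathbb{S}^n_+$ at $W$, which is monotone because it is the projection onto a convex cone, and $V_y=\mathcal{A}\circ D\circ\mathcal{A}^*$ inherits positive semidefiniteness from $D$. I do not anticipate a genuine obstacle; the only points requiring care are the sign bookkeeping in the blocks of $M_y$ — in particular confirming that the $\tau_{ij}$ entries are positive precisely because the index sets $\alpha$ and $\gamma$ separate the positive and negative eigenvalues of $W$ — and the routine observation that conjugation by the orthogonal matrix $P$ preserves the trace inner product, so that the problem genuinely diagonalizes.
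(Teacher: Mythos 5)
Your proof is correct. The paper itself does not prove this proposition---it simply cites Qi and Sun~\cite{qi2006quadratically}---so there is no in-paper argument to compare against; your computation supplies the standard argument underlying that citation. The chain $\langle h, V_y h\rangle = \langle \tilde S, M_y\circ \tilde S\rangle = \sum_{i,j}(M_y)_{ij}\tilde S_{ij}^2$, obtained from the adjoint identity $\langle h,\mathcal{A}(M)\rangle=\langle\mathcal{A}^*h,M\rangle$ and the orthogonal invariance of the trace inner product, is sound, and the entrywise check that all entries of $M_y$ lie in $[0,1]$ (the $E$-blocks are ones, the remaining blocks are zero, and $\tau_{ij}=\lambda_i/(\lambda_i-\lambda_j)\in(0,1)$ because $\lambda_i>0>\lambda_j$) finishes the argument. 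One detail worth flagging: the paper's displayed formula for $M_y$ indexes the $\tau$ blocks by $j\in\beta$, which is a typo for $j\in\gamma$ (consistent with the definition of $\tau_{ij}$); your reading is the correct one, and the nonnegativity argument depends on it, since it is the separation of $\alpha$ and $\gamma$ into positive and negative eigenvalues that makes $\lambda_i-\lambda_j>0$. Your closing remark that the inner map $S\mapsto P(M_y\circ(P^TSP))P^T$ is an element of the generalized Jacobian of the projection onto $\mathbb{S}^n_+$, hence monotone, matches Qi and Sun's perspective, but your elementary Hadamard-product computation makes the proposition self-contained without invoking that machinery.
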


Since positive definiteness of $V_y$ is required for the conjugate gradient method,
a perturbation term is added in the linear system:
\begin{equation*}
(V_y\,+\,\epsilon I) d^k\,+\,\nabla \theta(y^k)\,=\,0 .
\end{equation*}
After getting the dual optimal solution $y^*$, the primal optimal solution $X^*$ can be recovered by:
\begin{equation*}
X^*\,=\,(G\,+\,\mathcal{A}^*y^*)_+ .
\end{equation*}

\subsection{Updating $U$\label{sec:updateU}}

The subproblem to update $U$ is:
\begin{equation*}
\begin{array}{ll}
\displaystyle{
{\operatornamewithlimits{\mbox{minimize}}_{U \, \in \, \mathbb{S}^{n}}}
} & n\,-\,\langle I,\,U\rangle \,+\,\frac{d_k}{2} ||U-\tilde{U}||_F^2\\ [0.15in]
\mbox{subject to} & 0\,\preceq\,U\,\preceq\,I,
\end{array}
\end{equation*}
with
\begin{displaymath}
\tilde{U} \, = \,  U^k \, - \, \frac{\rho}{d_k} X^k \, + \, \beta^k ( U^k - U^{k-1})
\end{displaymath}
with the Fast PALM method.
The objective is equivalent to minimizing
$||U\,-\,(\tilde{U} + \frac{1}{d_k}I)||_F^2$.
An analytical solution can be found for this problem.
Given the eigenvalue decomposition of
$\tilde{U} + \frac{1}{d_k}I$:
\begin{equation*}
\tilde{U} + \frac{1}{d_k}I
\,=\,\sum_{i=1}^n \sigma^{\tilde{U}}_i v_i v_i^T,
\end{equation*}
the optimal solution $U^*$ is:
\begin{equation*}
U^*\,=\, \sum_{i=1}^n\sigma^{U^*}_i v_i v_i^T
\end{equation*}
where the eigenvalue $\sigma^{U^*}_i$ takes the value:
\begin{equation*}
\sigma^{U^*}_i\,=\,\left\{
\begin{array}{ll}
0 & \mbox{if}~~\sigma^{\tilde{U}}_i \,<\,0\\
\sigma^{\tilde{U}}_i & \mbox{if}~~0\,\leq\,\sigma^{\tilde{U}}_i \,\leq\,1\\
1 & \mbox{if}~~\sigma^{\tilde{U}}_i \,> \,1
\end{array}
\right.
\end{equation*} 
Note also that
\begin{displaymath}
U^* \, = \, I \, - \, \sum_{i=1}^n (1-\sigma^{{U}^*}_i) \, v_iv_i^T.
\end{displaymath}
It may be more efficient to work with this representation if there are many more
eigenvalues at least equal to one as opposed to less than zero.

\section{Test Problems  \label{sec:comp}}

Our experiments included tests on coordinate recovery
problems~\cite{aapwolkowicz2011,ding1,laurent8,pongTsengMP,so1}.
In these problems, the distances between items in~$\real^3$ are given and it is necessary to recover
the positions of the items.
Given an incomplete Euclidean distance matrix $D=(d_{ij}^2)$, where:
\begin{equation*}
d_{ij}^2\,=\,\mbox{dist}(x_i,x_j)^2\,=\,{(x_i-x_j)^T(x_i-x_j)},
\end{equation*}
we want to recover the coordinate $x_i, i\,=\,1,\cdots,n$. Since the coordinate is in 3-dimensional space,
$x_i, i =1,\cdots,n $ is a $1\times 3$ vector.
Let $X\,=\,(x_1,x_2,\cdots,x_n)^T\in \mathbb{R}^{n\times 3}$.
The problem turns into recovering the matrix $X$. One way to solve the problem is to lift $X$ by
introducing $B = X X^T$ and we would like to find B that satisfies
\begin{equation*}
B_{ii}\,+\,B_{jj}\,-\,2\,B_{ij}\,=\,d_{ij}^2, ~~\forall (i,j)\,\in\,\Omega
\end{equation*}  
where $\Omega$ is set of pairs of points whose distance has been observed.
Since $X$ is of rank 3, the rank of the symmetric psd matrix $B$ is 3,
and so we seek to minimize the rank of $B$ in the objective.

\begin{figure}
  \centering
    \includegraphics[width=1\linewidth]{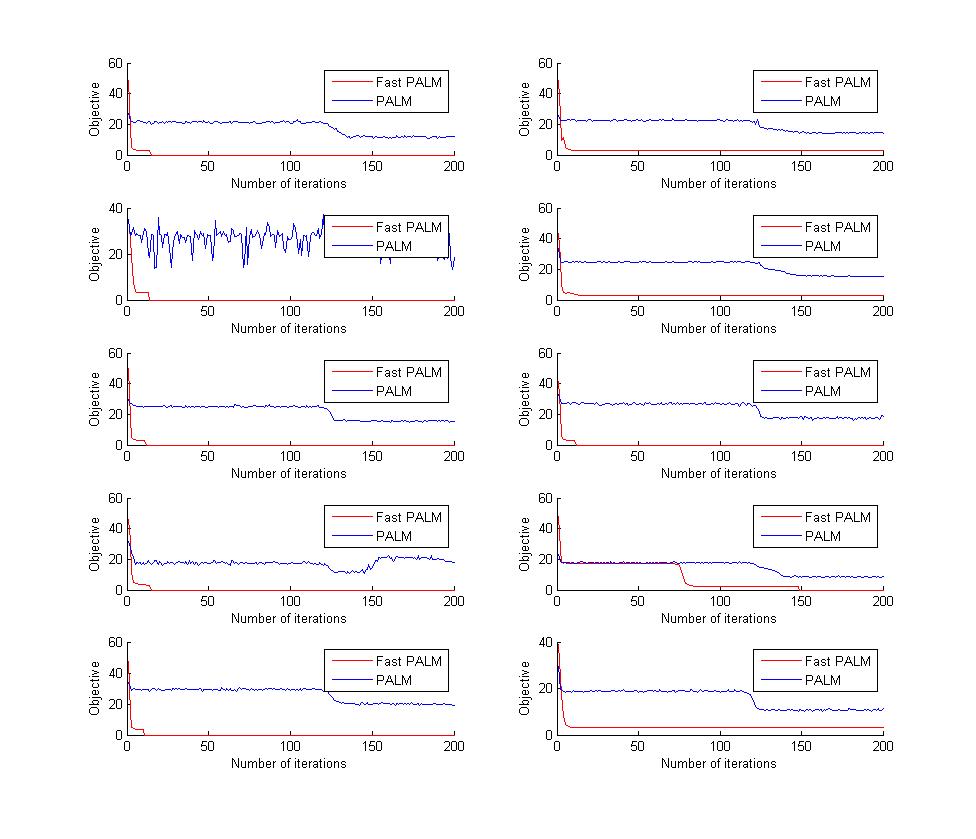}
    \caption[Computational Results of Case 1-10]{\small 
    Computational Results for 10 distance
    recovery instances. The objective value for the two methods at each iteration is plotted.
    If the algorithm  terminates before the iteration limit, the objective value after termination is 0
    in the figure.}
    \label{FastPALM1}
\end{figure}
\begin{figure}
  \centering
    \includegraphics[width=1\linewidth]{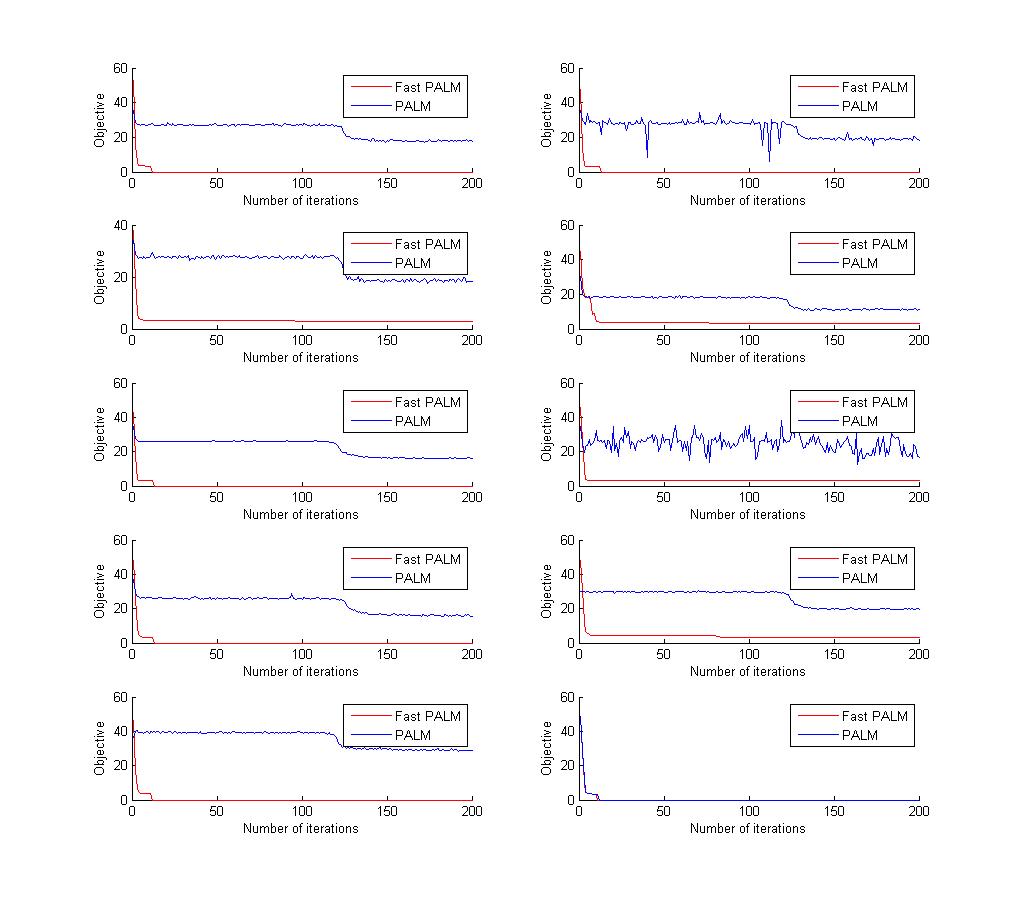}
    \caption[Computational Results of Case 11-20]{\small  Computational Results of Case 11-20}
    \label{FastPALM2}
\end{figure}
We generated 20 instances and in each instance we randomly sampled
150 entries from a $50 \times 50$
distance matrix. We applied the PALM method and the Fast PALM method to solve the problem.
For each case, we limit the maximum number of iterations to 200.
Figures \ref{FastPALM1} and~\ref{FastPALM2} each show the results on 10 of the instances.
As can be seen,
the Fast PALM approach dramatically speeds up the algorithm.
The computational tests were conducted using Matlab R2013b running in Windows 10
on an Intel Core i7-4719HQ CPU @2.50GHz with 16GB of RAM.

We  compared the rank of the solutions returned by the fast PALM method for
(\ref{PenaltyCompleRankPSD})
with the solution returned by the convex nuclear norm approximation to~(\ref{RankPSD}).
Note that when we calculate the rank of the resulting $X$ in both the convex approximation and
the penalty formulation, we count the number of eigenvalues above the threshold 0.01. 
\begin{figure}
   \begin{center}
    \includegraphics[scale=0.5]{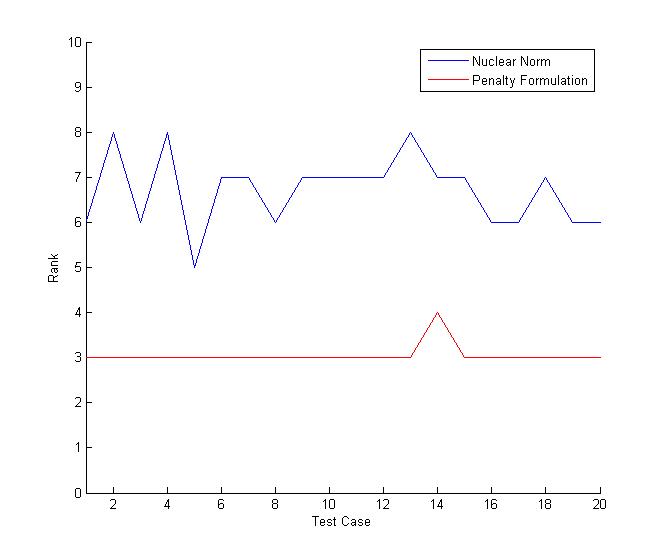}
    \end{center}
        \caption[Rank of Solutions When 150 Distances are Sampled]{\small  Rank of solutions
        when 150 distances are sampled for 20 instances with $150 \times 150$ distance matrix.}
    \label{Rank150}
\end{figure}
\begin{figure}
\begin{center}
     \includegraphics[scale=0.5]{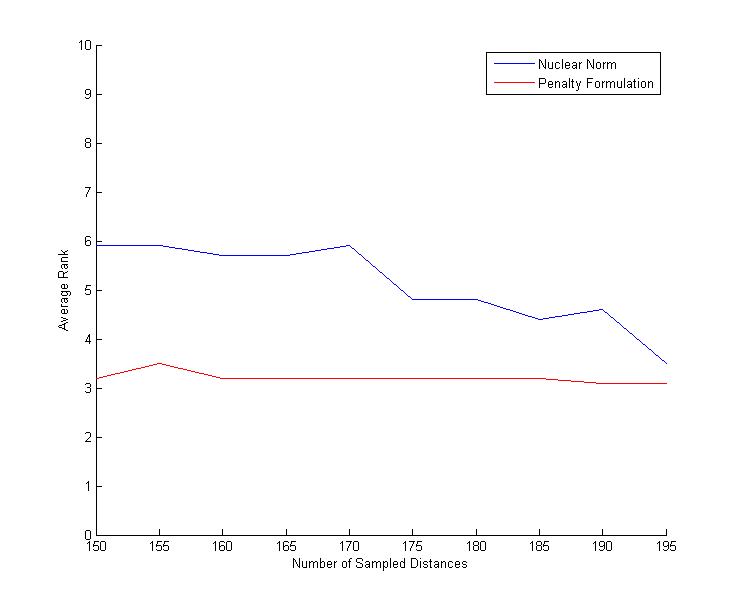}
     \end{center}
    \caption[Rank of Solutions When more Distances are Sampled]{\small  Average rank of solutions for the 20 instances,
   as the number of sampled distances increases from 150 to 195.}
    \label{Rank150to195}
\end{figure}
Figure \ref{Rank150} shows that when 150 distances are sampled, the solutions returned
by the penalty formulation have notably lower rank when compared with the convex approximation.
There was only one instance where the penalty formulation failed to find a solution
with rank~3; in contrast, the lowest rank solution found by the nuclear norm approach
had rank 5, and that was only for one instance.

We also experimented with various numbers of sampling distances from 150  to 195.
For each number, we randomly sampled that number of distances,
then compared the average rank returned by the penalty formulation and the convex approximation.
Figure \ref{Rank150to195} shows that the penalty formulation is more likely to recover a low rank matrix when the number of sampled distances is the same.
The nuclear norm approach appears to need about 200 sampled distances in order to obtain a
solution with rank 3 in most cases.
There has been some research on maximizing the nuclear norm os symmetric matrices to approximately
minimize rank~\cite{krislock3}. However, for our (very sparse) instances we obtained similar ranks either minimizing
or maximizing the nuclear norm.

For the 20 cases where 150 distances are sampled, the average time for CVX is 0.6590 seconds,
while for the penalty formulation the average time for the fast PALM method is 10.21 seconds.
Although the fast PALM method cannot beat CVX in terms of speed,
it can solve the problem in a reasonable amount of time and produces a lower rank solution
for our test instances.

\section{Conclusions}

The SDCMPCC approach gives an equivalent nonlinear programming
formulation for the combinatorial optimization problem of minimizing the rank
of a symmetric positive semidefinite matrix subject to convex constraints.
The disadvantage of the SDCMPCC approach lies in the nonconvex
complementarity constraint, a type of constraint for which constraint
qualifications might not hold.
We showed that the calmness constraint qualification holds for the SDCMPCC formulation
of the rank minimization problem, provided the convex constraints satisfy
an appropriate condition.
We developed a penalty formulation for the problem which satisfies calmness
under the same condition on the convex constraint.
The penalty formulation could be solved effectively using
an alternating direction approach, accelerated through the use of a
momentum term.
For our test problems, our formulation outperformed a nuclear norm approach,
in that it was able to recover a low rank matrix using fewer samples
than the nuclear norm approach.

There are alternative nonlinear approaches to rank minimization problems,
and it would be interesting to explore the relationships between the methods.
The formulation we've presented is to minimize the rank;
the approach could be extended to handle problems with
upper bounds on the rank of the matrix, through the use of a constraint
on the trace of the additional variable~$U$.
Also of interest would be the extension of the approach to the nonsymmetric case.

\bibliographystyle{siamplain}

\bibliography{optim}

\end{document}